\documentclass[12pt]{amsart}
\usepackage{latexsym,amsmath,amssymb,esint,enumitem,bbm}
\usepackage{mathtools,slashed}
\usepackage{graphicx,subfigure,psfrag}
\usepackage{xcolor}



\newcommand{\RR}{{\mathbb R}}
\newcommand{\One}{{\mathbbm 1}}


\addtolength{\hoffset}{-0.5cm}
\addtolength{\textwidth}{1cm}
\addtolength{\voffset}{-0.1cm}
\addtolength{\textheight}{0.2cm}
\usepackage{times}



\newtheorem{theorem}{Theorem}[section]
\newtheorem{Theorem}{Theorem}
\newtheorem*{theorem*}{Theorem}
\newtheorem{lemma}[theorem]{Lemma}
\newtheorem{corr}[theorem]{Corollary}

\theoremstyle{definition}

\newtheorem*{deff*}{Definition}
\newtheorem{remark}[theorem]{Remark}

\numberwithin{equation}{section}

\newcommand{\abs}[1]{\left\lvert #1 \right\rvert}
\definecolor{airforceblue}{rgb}{0.36, 0.54, 0.66}
\definecolor{amber}{rgb}{1.0, 0.49, 0.0}
\definecolor{crimson}{rgb}{0.86, 0.08, 0.24}
\definecolor{amethyst}{rgb}{0.6, 0.4, 0.8}



\begin{document}
\title[Strict concavity properties of cross covariograms] {Strict concavity properties of cross covariograms}
\author {Gabriele Bianchi}
\address{(G.B.) Universit\`a di Firenze,
Dipartimento di Matematica e Informatica 
`Ulisse Dini', Viale Morgagni 67/a, 50139, Firenze, Italy.}
\email{\tt gabriele.bianchi@unifi.it}

\author[Burchard]{Almut Burchard}
\address{(A.B.) University of Toronto, 
Department of Mathematics, 40 St. George Street, Rm. 2190,
Toronto, ON M5S 2E4, Canada.}
\email{\tt almut@math.toronto.edu}

\author{Lawrence Lin}
\address{(L.L.) University of Toronto, 
Department of Mathematics, 40 St. George Street, Rm. 2190,
Toronto, ON M5S 2E4, Canada.}
\email{\tt lawrence.lin049@gmail.com}

\begin{abstract} 
It is well-known that
the cross covariogram of two convex bodies in $\RR^n$ 
is $1/n$-concave on its support. 
This paper provides conditions for strict $1/n$-concavity
in dimension $n>1$, and an analysis how it can fail.
Among the implications are
that ({\em i.})
the cross covariogram of strictly 
convex bodies is strictly $1/n$-concave, unless
one body contains a translate of the other in its 
interior; and ({\em ii.}) the cross covariogram of an
arbitrary convex body with its reflection through the origin
is strictly log-concave.
\end{abstract}
\date{August 5, 2025}

\maketitle
\thispagestyle{empty}

\section{Introduction and main results} 
\label{sec:intro}

What is the optimal position for two of sets 
that maximizes the overlap 
(or minimizes the distance) between them?  
This is a basic geometric problem that 
appears in many applications.
We consider the problem of finding an optimal translation;
the same question could be asked about linear transformations,
including rotations and  dilations. Typically,
optimal positions exist, but can be hard to identify.

We focus on the special case of convex bodies,
that is, compact convex subsets of $\RR^n$ 
with non-empty interior.
Here, the problem is to maximize the cross covariogram
\[
g_{K,L}(x):= \left|K\cap (L+x)\right|\,,\quad  x\in\RR^n
\]
over the translation vector, $x$. 

The cross covariogram was introduced in the
1970s by Matheron as a geometric counterpart to
the variance of random variables~\cite{M75}.
It has found applications in 
the statistical analysis of geometrical data
ranging from crystallography and microscopy to physical geography.
A question that has received some attention
in the mathematics literature is Matheron's covariogram problem:
How much information about a convex body $K$ is encoded
in its covariogram $g_{K,K}$, and how to extract it?
This problem has largely been resolved in the last
20 years. A planar convex body is determined
by its covariogram 
uniquely up to translation and reflection~\cite{AB09};
the same is true for convex polytopes in dimension 3,
but not in higher dimensions. 
Recent developments also include algorithms
for reconstructing the body~\cite{BGK11},
and a characterization of the functions that can be realized
as covariograms~\cite{GL16}. For more information
about covariograms and related quantities
we refer the reader to~\cite{AB09} 
and the references therein.

In general, the cross covariogram $g_{K,L}$ is
continuous, supported on the sumset $K+ (-L)$, 
and attains its maximum on some non-empty compact 
convex set. We became 
interested in conditions for uniqueness
of maximizers in the course of investigating
the cases of equality for a new family of convolution-type 
inequalities~\cite{H23}.

Since the cross covariogram is the convolution of the
indicator functions $\One_{K}$ and $\One_{-L}$,
which are logarithmically concave,
it is itself log-concave by the 
Pr\'ekopa-Leindler inequality;
more precisely, $g^{1/n}_{K,L}$ is concave on its 
support~\cite{BL76}.
In particular, it has convex level sets,
and decreases monotonically along each
ray emanating from any point where the maximum 
is attained.

The Pr\'ekopa-Leindler theorem provides 
an entire family of inequalities for $\alpha$-concave
functions, were $\alpha\in [-1/n,\infty]$~\cite{L72, P73, Bo75,BL76,McC94} 
(see also~\cite{AAFS20,PiR21}).
The basic case is the endpoint $\alpha=-1/n$, 
which implies  the general inequality
by rescaling and taking limits.
The most important case is for the
class of log-concave functions ($\alpha=0$), 
where the Pr\'ekopa-Leindler 
inequality is scale-free; a well-known consequence
is that the convolution of log-concave functions
is again log-concave, regardless of dimension. 

The subject of this paper is 
strict $1/n$-concavity (and log-concavity)
of the cross covariogram.  The exponent $1/n$ 
corresponds to the endpoint case 
of the Pr\'ekopa-Leindler inequality
with $\alpha=+\infty$, valid on the
smallest concavity class which is comprised
of positive multiples of indicator functions 
of convex bodies.  Our first result provides 
a convenient sufficient condition.

\begin{Theorem} [Strict $1/n$-concavity]
\label{thm:global} 
Let $K,L$ be convex bodies in $\RR^n$, where \mbox{$n>1$}, 
and let $g_{K,L}$ be their cross covariogram.
If $K,L$ are strictly convex and 
\[
\max g_{K,L}<\min\{|K|,|L|\},
\]
then $g_{K,L}^{1/n}$ is strictly concave on its support.
\end{Theorem}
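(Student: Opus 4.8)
The plan is to argue by contradiction. Since $g_{K,L}^{1/n}$ is already concave on its support $C:=K+(-L)$ by~\cite{BL76}, to fail strict concavity it must be affine along some nondegenerate segment. I first note that $C$ is strictly convex: the Minkowski sum of two strictly convex bodies is strictly convex, because a segment in $\partial(A+B)$ would lie in the face $F(A+B,u)=F(A,u)+F(B,u)$, which is a single point. Hence the relative interior of any chord of $C$ lies in $\operatorname{int}C$, and I may assume $g_{K,L}^{1/n}$ is affine on a nondegenerate segment $[x_0,x_1]\subset\operatorname{int}C$. Writing $x_\mu=(1-\mu)x_0+\mu x_1$ and $M_\mu:=K\cap(L+x_\mu)$, each $M_\mu$ is a full-dimensional convex body (since $g_{K,L}(x_\mu)>0$), and convexity of $K$ and $L$ gives $(1-\mu)M_0+\mu M_1\subseteq M_\mu$. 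The chain
\[
|M_\mu|^{1/n}\ \ge\ \bigl|(1-\mu)M_0+\mu M_1\bigr|^{1/n}\ \ge\ (1-\mu)|M_0|^{1/n}+\mu|M_1|^{1/n}
\]
is then forced to be a chain of equalities. The first equality (an inclusion of convex bodies of equal volume) gives $(1-\mu)M_0+\mu M_1=M_\mu$ for every $\mu$; the second (the equality case of Brunn--Minkowski) gives that $M_0$ and $M_1$ are homothetic, say $M_1=cM_0+v$ with $c>0$. Thus $M_\mu=(1+\mu(c-1))M_0+\mu v$, and in particular $\mu\mapsto h_{M_\mu}$ is affine.

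Next I record the boundary structure and prove the key ``pinning'' step. From $\max g_{K,L}<\min\{|K|,|L|\}$ one cannot have $M_\mu=K$ or $M_\mu=L+x_\mu$ for any $\mu$ (these would force $g_{K,L}(x_\mu)=|K|$ or $=|L|$), so the two contact sets $A_\mu:=\partial M_\mu\cap\partial K$ and $B_\mu:=\partial M_\mu\cap\partial(L+x_\mu)$ are both nonempty for every $\mu$, and $\partial M_\mu=A_\mu\cup B_\mu$ because $\partial(K\cap(L+x_\mu))\subseteq\partial K\cup\partial(L+x_\mu)$. Now fix $\mu\in(0,1)$, a point $p\in A_\mu$, and an outer unit normal $u$ of $K$ at $p$. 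The function $\nu\mapsto h_K(u)-h_{M_\nu}(u)$ is affine (as $h_{M_\nu}$ is affine in $\nu$), nonnegative on $[0,1]$ (as $M_\nu\subseteq K$), and vanishes at the interior point $\nu=\mu$; an affine function on $[0,1]$ that is nonnegative and vanishes at an interior point vanishes identically, so $h_{M_\nu}(u)=h_K(u)$ for all $\nu\in[0,1]$. Since $K$ is strictly convex, $F(K,u)$ is a single point $\sigma_K(u)$, so $p=\sigma_K(u)$ and $F(M_\nu,u)=\{\sigma_K(u)\}=\{p\}$ for all $\nu$. Evaluating at $\nu=0$ and $\nu=1$ and using $M_1=cM_0+v$ yields $\{p\}=F(M_1,u)=cF(M_0,u)+v=\{cp+v\}$, hence $(1-c)\,p=v$.

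I then split into two cases. If $c\ne1$, the identity $(1-c)p=v$ forces $p=v/(1-c)=:z^{\ast}$, a point independent of $\mu$ and of the choice of $p\in A_\mu$; since this applies to every such $p$, $A_\mu=\{z^{\ast}\}$ for all $\mu\in(0,1)$. Applying the entire argument of the previous paragraph with $L$ in place of $K$ to the translated slices $\widetilde M_\mu:=(K-x_\mu)\cap L=M_\mu-x_\mu\subseteq L$ --- which again interpolate linearly in $\mu$ and whose endpoints are homothetic of ratio $c$ --- shows that $\partial\widetilde M_\mu\cap\partial L$ is a single point for $\mu\in(0,1)$, hence so is $B_\mu$. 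But then $\partial M_\mu=A_\mu\cup B_\mu$ has at most two points, which is absurd because $\partial M_\mu$ is homeomorphic to $S^{n-1}$ and $n>1$. If $c=1$, then $(1-c)p=v$ gives $v=0$, so $M_\mu=M_0$ for all $\mu$; in particular $M_0\subseteq L+x_\nu$ for every $\nu$, and $B_\mu\ne\emptyset$, so I may fix $\mu\in(0,1)$, $q\in B_\mu$, and an outer unit normal $u$ of $L+x_\mu$ at $q$. The function $\nu\mapsto h_{L+x_\nu}(u)-h_{M_0}(u)=\bigl(h_L(u)+\langle x_\nu,u\rangle\bigr)-h_{M_0}(u)$ is affine in $\nu$, nonnegative (as $M_0\subseteq L+x_\nu$), and vanishes at $\nu=\mu$, hence vanishes identically; strict convexity of $L$ then gives $F(M_0,u)=F(L+x_\nu,u)=\{\sigma_L(u)+x_\nu\}$ for all $\nu$, so $x_\nu$ is independent of $\nu$, contradicting $x_0\ne x_1$. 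This completes the proof.

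The main obstacle is the pinning step of the second paragraph together with the bookkeeping needed to make the dichotomy airtight: one must verify that the contact sets $A_\mu,B_\mu$ are nonempty for \emph{every} $\mu\in[0,1]$ (not just generically), that $\partial M_\mu$ is genuinely covered by $A_\mu\cup B_\mu$, and that the affine-vanishing argument can be applied to \emph{every} admissible pair (contact point, supporting normal), so that it pins down the full contact set rather than a single point of it. The supporting facts --- $\partial(A\cap B)\subseteq\partial A\cup\partial B$, the equality cases of Brunn--Minkowski and of equal-volume inclusion of convex bodies, and the singleton property of $F(\cdot,u)$ for strictly convex bodies --- are standard, but they must be marshalled carefully.
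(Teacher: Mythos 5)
Your proof is correct, but it takes a genuinely different route from the paper's. The paper first invokes a connectedness argument (its Lemma~\ref{lem:dKdL}) to show that the hypothesis $\max g<\min\{|K|,|L|\}$ forces the two boundaries to meet in a \emph{common} point $z\in\partial K\cap\partial(L+x)$; the homothety relation coming from equality in Brunn--Minkowski (Lemma~\ref{lem:affine}) then exhibits $z$ as the midpoint of $z\pm(\lambda z+v)\in K$ and of $z\pm(\lambda z+v-w)\in L+x$, and since every boundary point of a strictly convex body is an extreme point, both decompositions are trivial, giving $\lambda z+v=\lambda z+v-w=0$ and hence $w=0$. You never use a point lying on both boundaries: you only need each slice $M_\mu$ to touch $\partial K$ and $\partial(L+x_\mu)$ separately, and you extract the contradiction from the rigidity of the affine family $\nu\mapsto h_{M_\nu}$ --- the pinning of support sets via the ``nonnegative affine function vanishing at an interior point'' trick, followed by the case split on the homothety ratio $c$ (counting boundary points when $c\ne 1$, forcing $x_\nu$ constant when $c=1$). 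Your version is longer and needs more bookkeeping, but it replaces the extreme-point argument by the singleton-face property $F(K,u)=\{\sigma_K(u)\}$ and is self-contained, whereas the paper's lemmas are structured for reuse in Theorems~\ref{thm:constant} and~\ref{thm:affine}. Two small points worth making explicit: your claim that $A_\mu\ne\emptyset$ (resp.\ $B_\mu\ne\emptyset$) needs the one-line observation that $\partial M_\mu\subseteq\partial(L+x_\mu)$ would force $M_\mu=L+x_\mu$ and hence $g(x_\mu)=|L|$ (this is essentially the paper's Lemma~\ref{lem:dKdL} applied one boundary at a time); and the strict convexity of $K+(-L)$ genuinely requires \emph{both} factors to be strictly convex, which your hypothesis supplies.
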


The essence of the theorem 
is that $g_{K,L}=\One_K*\One_{-L}$
is {\em strictly} $1/n$-concave, even though
the indicator functions
$\One_K$ and $\One_{-L}$ are not. 
In dimension $n>1$, the assumption on the maximum
ensures that the boundaries
$\partial K$ and $\partial (L+x)$
intersect non-trivially whenever $g_{K,L}(x)>0$. 
In the special case where $K$ and $L$ are symmetric
under $x\to -x$, the cross covariogram 
$g_{K,L}$ attains its maximum at the origin,
and the assumption on its 
value can be replaced by the sharper condition 
that $\partial K \cap \partial L$
is non-empty (Corollary~\ref{corr:symmetric}).
The conclusion implies that
$g_{K,L}$ attains its maximum at a unique point, and 
that the level sets 
\mbox{$\{x\in\RR^n \mid g_{K,L}(x)\ge t\}$}
at positive heights are strictly convex.

Strict convexity of the bodies is not always necessary,
as evidenced by the following result
of Meyer, Reisner, and Schmuckenschl\"ager~\cite[Prop. 2.2]{MRS93}:

\begin{theorem*} [Covariogram of symmetric bodies~\cite{MRS93}]
Let $K\subset\RR^n$ be a convex body 
such that $-K=K$, and let $g_{K,K}$ its covariogram.
Then the level sets
\[
\{x\in\RR^n\mid g_{K,K}(x)\ge h\},\qquad (0<h<|K|)
\]
are strictly convex.
\end{theorem*}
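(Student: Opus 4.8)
The plan is to prove the equivalent statement that for every $h$ with $0<h<|K|$ and every pair of distinct points $x_0,x_1$ in the level set $L_h:=\{x: g_{K,K}(x)\ge h\}$, the midpoint $x_{1/2}:=\tfrac12(x_0+x_1)$ satisfies $g_{K,K}(x_{1/2})>h$. Since $g_{K,K}$ is continuous with $g_{K,K}(0)=|K|>h$, the open set $\{g_{K,K}>h\}$ is a nonempty subset of $L_h$, so this property is exactly strict convexity of the (compact, convex) body $L_h$.

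Assume, for contradiction, that $g_{K,K}(x_{1/2})=h$ for some such $x_0\ne x_1$; note $g_{K,K}(x_{1/2})\ge h$ is automatic from the concavity of $g_{K,K}^{1/n}$~\cite{BL76}. The midpoint case of that concavity inequality then forces $g_{K,K}(x_0)=g_{K,K}(x_1)=h$, and since a one–variable concave function that agrees at the endpoints and the midpoint of an interval is constant on it, $g_{K,K}\equiv h$ along the whole segment $[x_0,x_1]$. Set $x_t=(1-t)x_0+tx_1$ and $A_t=K\cap(K+x_t)$, so $|A_t|=h$ for all $t\in[0,1]$.

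I would now invoke the equality case of the Brunn--Minkowski inequality. The inclusion $\tfrac12(A_0+A_1)\subseteq A_{1/2}$ --- precisely the one that makes $g_{K,K}^{1/n}$ concave --- together with $|A_0|=|A_1|=|A_{1/2}|=h$ forces $A_{1/2}=\tfrac12(A_0+A_1)$ and forces $A_0,A_1$ to be homothetic; having equal volumes, they are translates: $A_1=A_0+c$, and hence $A_{1/2}=A_0+\tfrac c2$. This is the only step that uses the hypothesis $-K=K$: for symmetric $K$ one has the identity $x-\bigl(K\cap(K+x)\bigr)=K\cap(K+x)$, i.e.\ $A_t$ is centrally symmetric about $x_t/2$, and matching the centre of $A_1=A_0+c$ pins down $c=\tfrac12(x_1-x_0)=:w\ne0$.

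The contradiction is then immediate: if $p\in A_1=A_0+w$, then $p-w\in A_0=K\cap(K+x_0)$, so $p-w-x_0\in K$; as $w+x_0=x_{1/2}$ this gives $p-x_{1/2}\in K$, and since $p\in A_1\subseteq K$ we get $p\in K\cap(K+x_{1/2})=A_{1/2}$. Thus $A_1\subseteq A_{1/2}$, i.e.\ $A_0+w\subseteq A_0+\tfrac w2$, i.e.\ $A_0+\tfrac w2\subseteq A_0$, which is impossible for a nonempty bounded set and $w\ne0$. The genuinely essential point is the symmetry in the third step: without it the Brunn--Minkowski equality case only says $A_1$ is \emph{some} translate of $A_0$, with no information on the translation vector --- and indeed strict convexity of the level sets can fail for general convex bodies (as it does already at the top level when one body contains a translate of the other), so the central--symmetry identity that forces the translation vector to equal $\tfrac12(x_1-x_0)$ is doing the real work.
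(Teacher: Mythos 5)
Your proof is correct. It is worth noting that the paper itself does not prove this statement --- it quotes it from \cite{MRS93} --- but it does prove a strictly stronger version (Corollary~\ref{corr:minus}, for $g_{K,-K}$ with $K$ not assumed symmetric), so the natural comparison is with that argument. The two proofs share the same skeleton: reduce to $g$ being constant on a segment, invoke the Brunn--Minkowski equality case to conclude that the slices $A_t=K\cap(K+x_t)$ are translates of one another, and use central symmetry of each $A_t$ about $x_t/2$ to pin the translation vector down to $\frac12(x_1-x_0)$. Where you diverge is the endgame. The paper routes through Theorem~\ref{thm:constant} and Lemma~\ref{lem:cylinder}: the translation invariance is upgraded to the statement that every supporting hyperplane of $K$ at a point of $\partial K\cap(-K+x_t)$ contains a segment in the direction of $w$, so the Gauss image of $\partial(K\cap(-K))$ lies in $w^\perp$ and the intersection has empty interior. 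Your endgame is more elementary: from $A_1=A_0+w$ and $A_{1/2}=A_0+\tfrac w2$ you deduce the inclusion $A_1\subseteq A_{1/2}$ directly from the definitions, hence $A_0+\tfrac w2\subseteq A_0$, which is impossible for a nonempty bounded set. This is shorter and avoids supporting hyperplanes entirely, at the cost of using the symmetry of $K$ twice (once to locate the centre of $A_t$, once in the inclusion computation), which is why it does not extend to the asymmetric setting of Corollary~\ref{corr:minus}, where the paper's cylinder/Gauss-map argument is doing work that your set-inclusion trick cannot replace. All the individual steps you use --- midpoint strict concavity sufficing for strict convexity of level sets, a concave function agreeing at endpoints and midpoint being constant, homothety plus equal volume forcing a translation --- check out.
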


\smallskip
The main part of the paper concerns failure of strict 
$1/n$-concavity of cross covariograms. 
In principle, this can be understood
in terms of equality cases in the Pr\'ekopa-Leindler inequality,
as applied to the convolution 
$\One_K*\One_{-L}$~\cite[Corollary 3.5]{BL76}.
The literature contains  a number of 
results on equality cases and stability
of the Pr\'ekopa-Leindler inequality, 
including Dubuc's classical result~\cite{D77} (for $\alpha=-1/n$),
and more recently~\cite{BallBo10,BuF14,BorFR23} (for $\alpha=0$).

Here we investigate, for the special case of the covariogram
$g_{K,L}=\One_K*\One_{-L}$,
how {\em analytic} properties of the convolution
are determined by {\em geometric} properties of the factors.
To motivate our results, consider a pair
of (possibly unbounded)
closed convex sets $K, L\subset\RR^n$
such that $g_{K,L}(x)<\infty$ for all $x\in\RR^n$
and $g_{K,L}$ does not vanish identically.
There are three obvious scenarios
where $g^{1/n}_{K,L}$ is affine on 
some subset:

\begin{figure}[t]
    \centering
    \includegraphics[height=3cm]{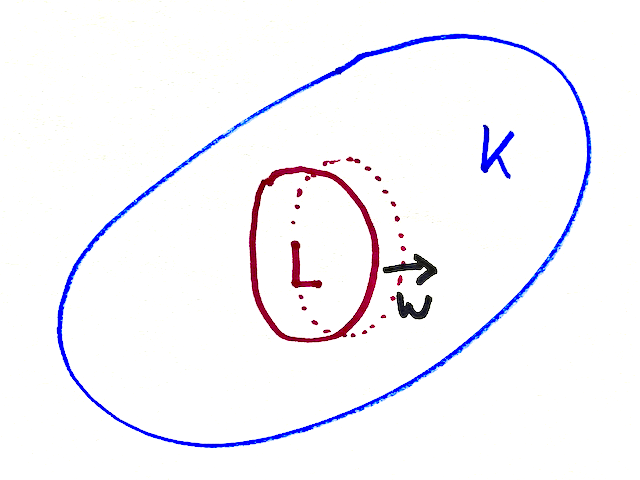}
\hspace{1cm}
    \includegraphics[height=3.2cm]{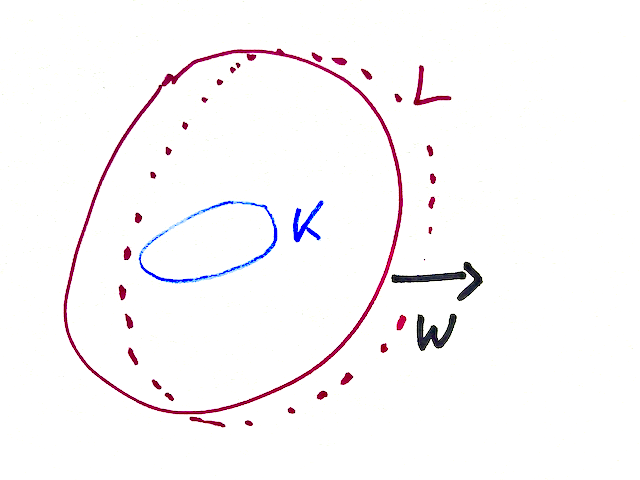}
    \caption{\small In these examples, $g_{K,L}$ has a plateau at height
$\min\{|K|,|L|\}$.}
\label{fig:containment}
\end{figure}

\smallskip

\begin{enumerate}
\item {\bf Compact containment.} \ 
If $K$ contains $L$ in its interior
and $L$ has positive volume,
then the graph of $g_{K,L}$ has a plateau at the top 
(see Fig.~\ref{fig:containment}, left).

\smallskip

\item {\bf Cylinders.} \ 
If $K$ is invariant under translation in some direction,
then $g_{K,L}$ is constant on lines in that direction
(see Fig.~\ref{fig:cylinder}, left).

\smallskip

\item {\bf  Pairs of cones.}\ 
If $K$, $L$ are closed convex cones in $\RR^n$
that are disjoint except for a
common vertex, then $g_{K,L}^{1/n}$ is
affine along rays
(see Fig.~\ref{fig:cones}).
\end{enumerate}

\smallskip
We note that in Scenario (3), the cross covariogram 
is still strictly log-concave
(in fact, $\alpha$-concave for every $\alpha<1/n$).
The unbounded cylinders and cones can be
truncated to produce examples of 
convex bodies whose cross covariogram is affine 
on certain line segments.  Scenarios (2) and (3)
include as a special case the situation where $K$ 
is a half-space and $L$ a convex cone,
which are disjoint except that $\partial K$ contains
the vertex of $L$.  Translations parallel to the boundary of the
half-space fall under Scenario (2), and
transversal translations under Scenario~(3).

Up to truncation, translation, and
interchange of $K$ with $L$,
the above scenarios turn out to be the only obstructions
to strict $1/n$-concavity of the cross covariogram.
Our next result addresses Scenario (2).

\begin{figure}[t]
    \centering
    \includegraphics[height=3.2cm]{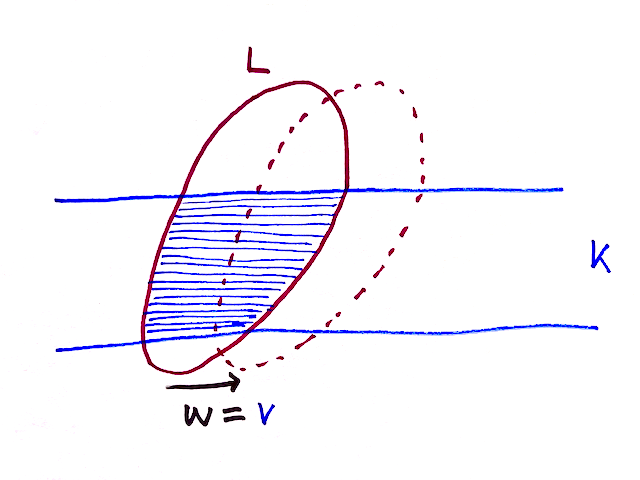}\quad
    \includegraphics[height=3.2cm]{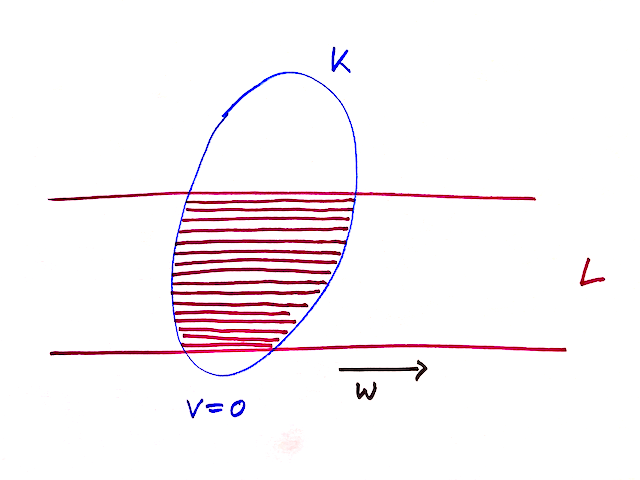}\quad
    \includegraphics[height=3.2cm]{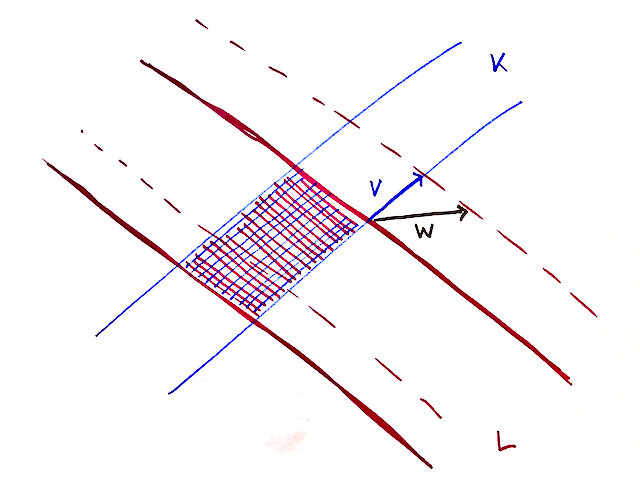}
    \caption{\small Examples where $g_{K,L}$ is constant along a line 
segment in the direction of $w$. The first two images
correspond to the cases $v=w$ and $v=0$ of 
Theorem~\ref{thm:constant}; in the last image $v\not\in \{0,w\}$.
}
\label{fig:cylinder}
\end{figure}

\begin{Theorem} [$g_{K,L}$ constant on a line segment]
\label{thm:constant}
Let $K,L\subset\RR^n$ be closed convex sets
and let $a, w\in\RR^n$ with $w\ne 0$.
If 
\[
g_{K,L} (a+tw) = g(a)>0,\qquad (-1\le t\le 1),
\]
then there exists a vector $v\in\RR^n$ such that 
\begin{equation}
\label{eq:KL-prime}
K\cap (L+a+tw)= K'\cap (L'+tw),  \qquad (-1\le t\le 1),
\end{equation}
where
\begin{align*}
K'&:=\left\{x+tv \mid x\in K, t\in\RR\right\}, \\
L'&:=\left \{x+ t(v\!-\!w)\mid x\in L+a, t\in\RR\right\}.
\end{align*}
\end{Theorem}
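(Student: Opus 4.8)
The idea is to read off the structure of the intersections $K \cap (L+a+tw)$ from the constancy of their volume, by differentiating (or rather, taking a Brunn--Minkowski-type difference) in the $t$ variable. Write $M_t := K \cap (L + a + tw)$, so that $|M_t| = g(a)$ is constant for $t \in [-1,1]$. A first reduction: since $g_{K,L}^{1/n}$ is concave on its support and constant on the segment $a + [-1,1]w$, the segment lies in the interior of the support; moreover the level set $\{g_{K,L} \ge g(a)\}$ contains this segment, so after a harmless shrinking we may assume $K, L$ are bounded (intersecting both with a large ball around $a$ changes nothing near $M_t$ for $t$ in a slightly smaller interval, and the statement is local in that sense). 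The key structural input is the equality case of Brunn--Minkowski: for $s,t \in [-1,1]$ and $\lambda \in [0,1]$, convexity of $K$ and of $L+a$ gives
\[
\lambda M_s + (1-\lambda) M_t \ \subseteq\ K \cap \bigl(L + a + (\lambda s + (1-\lambda)t)w\bigr) = M_{\lambda s + (1-\lambda)t},
\]
hence $|\lambda M_s + (1-\lambda)M_t| \le g(a) = |M_s|^{\lambda}|M_t|^{1-\lambda}$, which combined with Brunn--Minkowski forces equality. Therefore $M_s$ and $M_t$ are homothetic: $M_t = M_{-1} + \mu(t)\,u$ for some scalar function $\mu$ and some fixed vector $u$ — and since the volumes agree, the homothety ratio is $1$, so the sets $M_t$ are all \emph{translates} of one another, $M_t = M_{-1} + r(t)\,u$ with $|u|$ normalized suitably. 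Affinity of $t \mapsto r(t)$ (each $M_t$ is sandwiched between translates determined by its neighbors, as in the displayed inclusion) gives $M_t = M_0 + t\,b$ for a single vector $b$.

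Now I extract $v$. We have $M_t - tb = M_0$ independent of $t$. On one hand $M_t \subseteq K$, so $M_0 + tb \subseteq K$, i.e. $M_0 \subseteq K - tb$ for all $t \in [-1,1]$; on the other hand $M_t \subseteq L + a + tw$, so $M_0 + tb \subseteq L + a + tw$, i.e. $M_0 \subseteq L + a + t(w - b)$. This says $M_0$ (which has nonempty interior, being a positive-volume convex body) sits inside the "tube" swept by translating $K$ along $-b$ and inside the tube swept by translating $L+a$ along $w - b$. Set $v := -b$; then $K' = \{x + tv : x \in K,\ t \in \RR\}$ is exactly the union of translates $K + tv$, $L' = \{x + t(v - w) : x \in L+a,\ t \in \RR\}$ the union of translates $(L+a) + t(v-w) = (L+a) - t(w-b) \cdot(-1)$... — more carefully, $v - w = -b - w$, so $L'$ is the union of $(L+a) + s(b+w)$; I should double-check the sign bookkeeping so that $L' \cap (L' + tw)$-type expressions land correctly, but the content is fixed. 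The claimed identity (1.2), $K \cap (L + a + tw) = K' \cap (L' + tw)$, then follows: the left side is $M_t = M_0 + tb$; the right side contains $M_t$ because $K + 0\cdot v \subseteq K'$ and $(L+a) + 0 \subseteq L'$ shifted appropriately, and conversely any point of $K' \cap (L' + tw)$ lies in some $K + t_1 v$ and some $(L + a + tw) + t_2(v - w)$, and tracing through the "$M_0 \subseteq$ tube" inclusions together with strict convexity / the volume normalization pins it back into $M_t$.

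The main obstacle I anticipate is the last step — verifying the \emph{reverse} inclusion $K' \cap (L' + tw) \subseteq K \cap (L + a + tw)$. Knowing $M_0$ lies in each tube does not by itself say the two tubes meet \emph{only} along the $M_t$'s; one must use that equality in Brunn--Minkowski for \emph{all} pairs (not just $M_{-1}, M_1$) propagates, and in particular that the supporting hyperplanes of $K$ and of $L+a$ that cut out $M_t$ are, on the relevant faces, parallel to the translation directions $v$ and $v - w$ respectively. Concretely, the facets of $K$ that bound $M_0$ must be invariant under translation by $v$ (else $M_0 + tb \subseteq K$ would fail for small $t$ of one sign), and likewise the facets of $L+a$ bounding $M_0$ must be invariant under translation by $v - w$; this is what makes $K'$ and $L'$ "cylinders over those facets" near $M_0$, and forces $K' \cap (L'+tw)$ to coincide with $M_t$. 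Handling this cleanly — especially when $\partial K$ and $\partial(L+a)$ are not smooth and the "facets" are lower-dimensional — is where the real work lies; I expect the paper does this via the support-function description of the touching hyperplanes, possibly splitting into the directions where $M_t$ touches $\partial K$ versus $\partial(L+a)$, which is presumably also the source of the trichotomy $v = w$, $v = 0$, $v \notin \{0,w\}$ illustrated in Figure 2.
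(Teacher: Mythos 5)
Your first half is correct and matches the paper: the inclusion $\lambda M_s+(1-\lambda)M_t\subseteq M_{\lambda s+(1-\lambda)t}$ plus constancy of the volume forces equality in Brunn--Minkowski, and since the volumes are equal the homothety ratio is $1$, so $M_t=M_0+tb$ for a single vector $b$ (this is Lemma~\ref{lem:affine} of the paper with $\lambda=0$, where $b$ is the paper's $v$). But the entire content of the theorem is the reverse inclusion $K'\cap(L'+tw)\subseteq K\cap(L+a+tw)$ (the forward one is trivial from $K\subseteq K'$, $L+a\subseteq L'$), and this is precisely the step you leave as an ``anticipated obstacle'' with only a heuristic about facets. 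That is a genuine gap: knowing $M_0\subseteq K-tb$ and $M_0\subseteq L+a+t(w-b)$ for all $t\in[-1,1]$ does not by itself control points of $K'\cap L'$ lying outside $K\cap L$, and you do not supply the argument that rules them out. The paper closes this gap with Lemma~\ref{lem:cylinder}: given $z\in L\setminus K$ and $y\in K\cap L$, the segment $[z,y]$ meets $\partial K\cap L$ at some $z'$; any supporting hyperplane $H$ of $K$ at $z'$ must contain the whole segment $z'\pm v$ (both endpoints lie in $K$, hence on the closed $K$-side of $H$, and their midpoint lies on $H$), so $H$ is invariant under translation by $v$ and therefore separates $z$ not just from $K$ but from the entire cylinder $K'$. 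Applying this once to $K$ and once to $L$ gives $K\cap L=K'\cap L=K'\cap L'$. Note that this disposes of your worry about non-smooth boundaries and lower-dimensional ``facets'': an arbitrary supporting hyperplane suffices, no smoothness or facial structure is needed.

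Two smaller problems. First, the sign: from $M_t=M_0+tb$ you must take $v:=b$, not $v:=-b$. The cylinder $K'$ is insensitive to the sign of $v$, but $L'$ is the cylinder in the direction $v-w$, and with $v=-b$ you would get the direction $b+w$ instead of the required $b-w$; you flag the bookkeeping but do not resolve it, and as written the pairing of directions is wrong. Second, the reduction to bounded $K,L$ by intersecting with a large ball cannot be used as stated: the conclusion~\eqref{eq:KL-prime} involves the cylinders $K',L'$ generated by the \emph{full} sets $K$ and $L+a$, and truncating $K$ and $L$ changes $K'$ and $L'$, so you would only prove a weaker statement about truncated cylinders. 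The paper needs no such reduction; the standing hypothesis $0<g(a)<\infty$ already makes each $K\cap(L+a+tw)$ a convex body, and Lemma~\ref{lem:cylinder} works for unbounded closed convex sets.
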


Since $K'$ and $L'$ are invariant under translation by $v$
and $w-v$, respectively, 
Eq.~\eqref{eq:KL-prime} says that
\[ 
K\cap (L+a+tw) = (K'\cap L')+t v\qquad (-1\le t\le 1),
\]
see Section~\ref{sec:cylinder}.

If $v\ne 0$ then $K'$ is the cylinder
in the direction of $v$ generated by $K$.
In this case, Eq.~\eqref{eq:KL-prime} implies
that with every point $z\in \partial K \cap (L+a)$, 
the boundary of $K$ contains the entire
line segment with endpoints $z\pm v$.
This is a strong constraint unless 
$\partial K \cap (L+a)$ is empty, in which
case $K$ contains $L+a$ in its interior.
If $v=0$ then $K'=K$ and Eq.~\eqref{eq:KL-prime}
places no condition on the shape of $K$ (see Fig.~\ref{fig:cylinder}, middle).

Correspondingly, if $v\ne w$ then
$L'$ is the cylinder in the direction of $v-w$ 
generated by $L+a$, and
with every point $z\in K \cap \partial (L+a)$, 
the boundary of $L$ contains the entire
line segment with endpoints $z-a\pm v$. 
If $v=w$ there is no condition on the shape of~$L$.
Note that $v$ and $v-w$ cannot vanish simultaneously.
If both are non-zero, then $g_{K,L}$ is constant on
a parallelogram with sides
parallel to $v$ and $w-v$ (see Remark~\ref{remark:st}
and Fig.~\eqref{fig:cylinder}, right).

\begin{figure}[t]
    \centering
    \includegraphics[height=3.2cm]{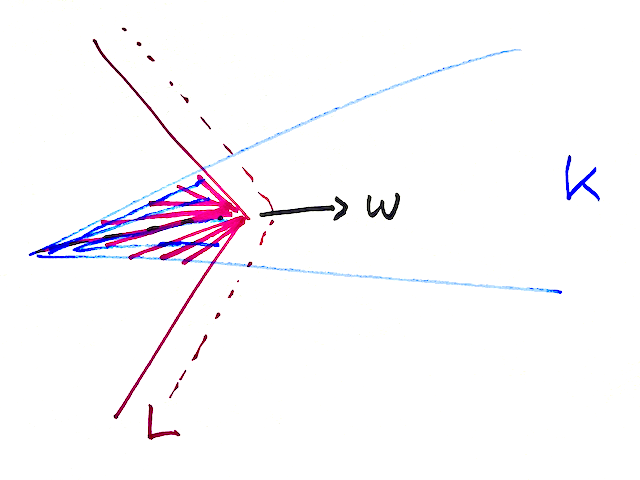}\quad
    \includegraphics[height=3.2cm]{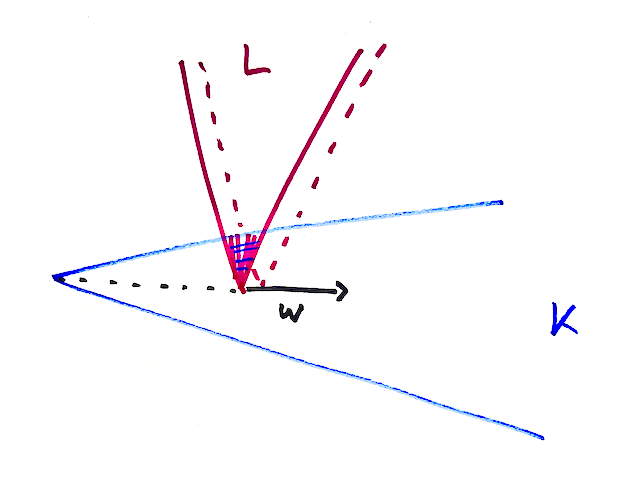}\quad
    \includegraphics[height=3.2cm]{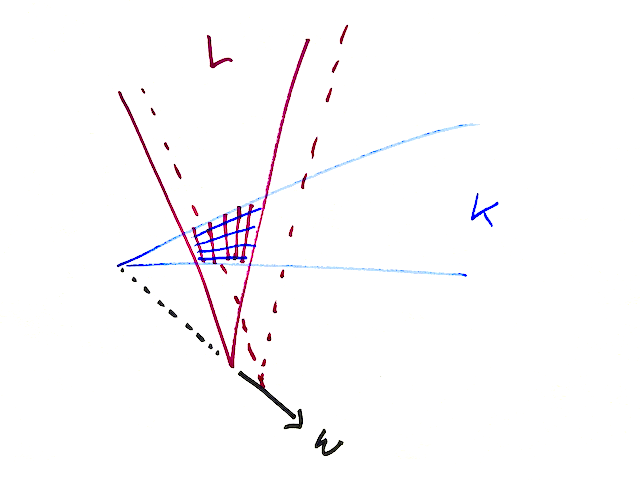}
    \caption{\small Examples where $g_{K,L}^{1/n}$ is
affine and strictly increasing along a line segment
in the direction of $w$. The vector $w$ is
parallel to the line that connects the vertices of the cones.}
\label{fig:cones}
\end{figure}

We turn to Scenario (3).

\begin{Theorem} [$g_{K,L}^{1/n}$ non-constant affine
on a line segment] \label{thm:affine} 
Let $K,L\subset\RR^n$ be
closed convex sets, with cross covariogram $g_{K,L}$,
and let $a,w\in\RR^n$ with $w\ne 0$.
If there exists a constant $\beta\ne 0$ such that
\[
g_{K,L}^{1/n}(a+tw) = g_{K,L}^{1/n} +t\beta, \qquad (-1\le t\le 1),
\]
then there exist $v\in\RR^n$ and $\lambda\in(0,1]$ such that
for all $t\in [-1,1]$
\begin{equation}
\label{eq:KL-pprime}
K\cap (L+a+tw)= K''\cap (L''+ tw).
\end{equation}
Here,
\begin{align*}
K''&:=\left\{-\tfrac1\lambda v +t(x+\tfrac1\lambda v)
\mid  x\in K, t\ge 0 \right\}, \\
L''&:=\left\{-\tfrac1\lambda(v\!-\!w)+ 
t(x+\tfrac1\lambda(v\!-\!w))\mid x\in L+a, t\ge 0 \right\}
\end{align*}
are the convex cones with 
vertices $-\frac1\lambda v$ and $-\frac1\lambda(v\!-\!w)$
generated by $K$ and $L+a$, respectively.
\end{Theorem}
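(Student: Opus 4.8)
\emph{Plan.} The strategy is to use the equality case of the Brunn--Minkowski inequality to determine the shape of the slices $K\cap(L+\cdot\,w)$, and then to exploit a scaling symmetry of the covariogram of the two cones $K'',L''$.

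First I normalize: replacing $L$ by $L+a$ (so $a=0$), and $w$ by $-w$ if necessary, I may assume $a=0$ and $\beta>0$. Writing $c:=g_{K,L}(0)^{1/n}$, the hypothesis forces $c\ge\beta>0$; put $\lambda:=\beta/c\in(0,1]$ and $t^*:=-1/\lambda=-c/\beta\le-1$. For $t\in(-1,1)$ the slice $A_t:=K\cap(L+tw)$ is a convex body of positive volume, and from the Minkowski superadditivity $A_{(1-\mu)s+\mu t}\supseteq(1-\mu)A_s+\mu A_t$, together with Brunn--Minkowski and the fact that $|A_t|^{1/n}=c+t\beta$ is affine in $t$, one gets that this inclusion is an equality and then, via the equality case of Brunn--Minkowski, that the $A_t$ are pairwise homothetic with a common center $P$; concretely $A_t=P+(c+t\beta)B_0$ for a fixed convex body $B_0$ of unit volume and all $t\in[-1,1]$. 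I then set $v:=-\lambda P$, so that the cones in the statement are exactly $K''=\mathrm{cone}_{P}(K)$ with vertex $P=-\tfrac1\lambda v$ and $L''=\mathrm{cone}_{p_1}(L)$ with vertex $p_1=-\tfrac1\lambda(v-w)=P+\tfrac1\lambda w$.

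The second ingredient is a homogeneity property of $g_{K'',L''}$. Since $K''$ is invariant under the homothety $y\mapsto P+r(y-P)$ and $L''$ under $y\mapsto p_1+r(y-p_1)$ for every $r>0$, a direct computation gives $g_{K'',L''}\bigl(t^*w+r(x-t^*w)\bigr)=r^{\,n}g_{K'',L''}(x)$; that is, $g_{K'',L''}$ is positively $n$-homogeneous about the point $t^*w=-\tfrac1\lambda w$ (in particular $g_{K'',L''}(t^*w)=0$). As $g_{K'',L''}^{1/n}$ is concave on its (finite) support, a positively $1$-homogeneous concave function is affine and vanishing along each ray from $t^*w$; along the ray in the direction $w$ this yields $g_{K'',L''}^{1/n}(tw)=\lambda\,|K''\cap L''|^{1/n}(t-t^*)$ for all $t\ge t^*$. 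On the other hand $K''\supseteq K$ and $L''\supseteq L$, so $g_{K'',L''}\ge g_{K,L}$; comparing with $g_{K,L}^{1/n}(tw)=c+t\beta=\beta(t-t^*)$ on $[-1,1]$ and using $t-t^*>0$ there forces $\lambda|K''\cap L''|^{1/n}\ge\beta$, i.e.\ $|K''\cap L''|\ge c^{\,n}=|K\cap L|$.

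Everything then reduces to the reverse inequality $|K''\cap L''|\le|K\cap L|$, which is the step I expect to be the main obstacle. Granting it, $K''\cap L''$ and $K\cap L$ are convex sets with $K''\cap L''\supseteq K\cap L$ and equal (finite) volume, hence equal; then $g_{K'',L''}^{1/n}(tw)=\beta(t-t^*)=c+t\beta=g_{K,L}^{1/n}(tw)$ on $[-1,1]$, and since $K''\cap(L''+tw)\supseteq K\cap(L+tw)$ with equal volume for each such $t$, the two coincide, which is \eqref{eq:KL-pprime}; undoing the normalization gives the stated $v$ and $\lambda$. To obtain $|K''\cap L''|\le|K\cap L|$ I would start from $g_{K'',L''}(t^*w)=0$: since $L''+t^*w=\mathrm{cone}_P(L+t^*w)$ shares the vertex $P$ with $K''$, this says that $\mathrm{cone}_P(K)$ and $\mathrm{cone}_P(L+t^*w)$ are convex cones with disjoint interiors, hence separated by a hyperplane through $P$, which gives half-space containments for $K$ and $L$. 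Feeding this separation together with the homothety description $A_t=P+(c+t\beta)B_0$ and the inclusions $A_t\subseteq K$, $A_t-tw\subseteq L$ back into the slice $K''\cap(L''+tw)$ --- equivalently, lifting to $\RR^{n+1}$ and separating a hypothetical extra point of $\mathrm{cone}_{P^*}(\hat K)\cap\mathrm{cone}_{P^*}(\hat L_w)$ from the cone $\mathrm{cone}_{P^*}(\{(t,z):z\in A_t\})$, where $\hat K=\RR\times K$, $\hat L_w=\{(t,z):z-tw\in L\}$, $P^*=(t^*,P)$ --- should confine $K''\cap L''$ to a region of volume $|K\cap L|$ (this argument also rules out $|K''\cap L''|=\infty$, i.e.\ handles unbounded $K,L$ via $g_{K,L}<\infty$). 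The endpoints $t=\pm1$, where $A_{-1}$ may be lower dimensional when $\lambda=1$, are dealt with by first proving \eqref{eq:KL-pprime} on $(-1,1)$ and passing to the closure.
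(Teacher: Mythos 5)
Your setup is sound and matches the paper's normalization: the Brunn--Minkowski equality case gives the homothetic slices $A_t=P+(c+t\beta)B_0$, your $v=-\lambda P$ makes $P$ the vertex of $K''$, and the inclusion $K\cap(L+tw)\subseteq K''\cap(L''+tw)$ is immediate (so your homogeneity detour to get $|K''\cap L''|\ge |K\cap L|$ is correct but unnecessary --- it already follows from $K\subseteq K''$, $L\subseteq L''$). The problem is that the entire content of the theorem sits in the reverse inclusion $K''\cap L''\subseteq K\cap L$, and this is exactly the step you leave as a sketch (``which is the step I expect to be the main obstacle'', ``should confine $K''\cap L''$ to a region of volume $|K\cap L|$''). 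The route you propose for it is also shaky at its starting point: you want to begin from $g_{K'',L''}(t^*w)=0$, but $K''\cap(L''+t^*w)$ is an intersection of two cones with common vertex $P$, hence has volume $0$ or $+\infty$, and ruling out $+\infty$ (equivalently, showing the two cones have disjoint interiors) is not something you can extract from the concavity of $g_{K'',L''}^{1/n}$ without first knowing $g_{K'',L''}$ is finite somewhere --- which is essentially the conclusion you are trying to prove. The lift to $\RR^{n+1}$ is not developed enough to assess.

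The paper closes this gap with a short, purely local separation argument (Lemma~\ref{lem:cones}), which you may want to compare with: from Eq.~\eqref{eq:tv}, every $z\in K\cap L$ satisfies $z\pm(\lambda z+v)\in K$, so a supporting hyperplane $H$ of $K$ at any point $z'\in\partial K\cap L$ must contain the segment $z'\pm(\lambda z'+v)$, hence a segment of the line through $z'$ and the vertex $-\tfrac1\lambda v=P$; such an $H$ therefore also supports the cone $K''$. Given $z\in L\setminus K$, the segment from $z$ to a point of $K\cap L$ crosses $\partial K\cap L$ at some $z'$, and the supporting hyperplane there strictly separates $z$ from $K''$; thus $L\setminus K\subseteq L\setminus K''$, i.e.\ $K''\cap L\subseteq K\cap L$. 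Applying the same argument with the roles of $K''$ and $L$ exchanged gives $K''\cap L''=K\cap L$, after which the dilation symmetry of the cones yields Eq.~\eqref{eq:KL-pprime} exactly as in your final paragraph. Without an argument of this kind (or a completed version of your $\RR^{n+1}$ separation), the proof is incomplete.
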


Since $K''$ and $L''$ are invariant under dilation from
their vertices, we can rewrite Eq.~\eqref{eq:KL-pprime} as
\[
K\cap (L+a+tw) = (1+t \lambda)(K''\cap L'') + tv,
\qquad (-1\le t\le 1),
\]
see Section~\ref{sec:cones}.

Eq.~\eqref{eq:KL-pprime} implies that
with every point in $\partial K\cap (L+a)$,
the boundary of $K$ contains a segment 
of the line that joins the point to the vertex of $K''$.
This condition is void if 
$K$ contains $L$ in its interior, so that
$\partial K\cap (L+a)$ is empty.
The corresponding statement holds
for points in $K\cap\partial(L+a)$.

In combination, Theorems~\ref{thm:constant} and~\ref{thm:affine} 
refine Theorem~\ref{thm:global} by providing a localized 
geometric view of situations where strict $1/n$-concavity fails. 
We can recover Theorem~\ref{thm:global}, as follows:
Under the assumptions of Theorem~\ref{thm:global}, both 
$\partial K\cap (L+a)$ and $K\cap\partial(L+a)$ 
are non-empty whenever $g(a)>0$.
By strict convexity, the boundaries
of $K$ and $L$ contain no line segments of positive length.
This means that neither Eq.~\eqref{eq:KL-prime}
nor Eq.~\eqref{eq:KL-pprime} can hold
for any $v, w$ with
$w\ne 0$. Since the conclusions of
Theorems~\ref{thm:constant} and~\ref{thm:affine}
are not satisfied, $g^{1/n}$ is strictly
concave on every line segment of positive length
in its support.

\smallskip \noindent 
{\bf Outline of the proofs.}
All three theorems rely on the characterization 
of the equality cases in the Brunn-Minkowski inequality
(Section~\ref{sec:prelims}).  A similar argument
(specialized to the case where $K$ is symmetric and $L=K$)
was used by Meyer, Reisner, and Schmuckenschl\"ager
to prove the result stated after Theorem~\ref{thm:global}.
For the proof of Theorem~\ref{thm:global}, 
we exploit that by strict convexity, every element of
$\partial K\cap \partial(L+x)$
is an extreme point of both $K$ and $L+x$ (Section~\ref{sec:global}).

As a key step in the proof of
Theorem~\ref{thm:constant}, we show that
under the stated assumptions, either
the tangent planes to $K$ at the portion
of its boundary inside $L+a$
are tangent also to the cylinder $K'$ (if $v\ne 0$),
or the tangent planes to $L$ at the portion of
it boundary inside $K$ are tangent to $L'$
(if $v\ne w$), or both (Section~\ref{sec:cylinder}).
Obviously, this condition is void if $K$ contains $L+a$
in its interior, or vice versa.

The proof of Theorem~\ref{thm:affine} proceeds in the same
way. However, there the tangencies occur simultaneously
between each of the sets $K,L+a$ and the corresponding
cones $K'',L''$ (Section~\ref{sec:cones}).

The conclusions of Theorems~\ref{thm:constant} and~\ref{thm:affine}
are rigid enough to yield
useful sufficient conditions for strict 
$1/n$-concavity. A few examples
are discussed in Section~\ref{sec:examples}.

\section{Preliminaries and notation} 
\label{sec:prelims}

To fix terms, we recall some standard conventions. 
A {\bf convex body} is a compact
convex set $A\subset\RR^n$ with non-empty interior.
The body is {\bf strictly} convex, if 
for any two distinct points $x, y$ in $A$ and
every $t\in (0,1)$, the convex combination
$(1-t)x+ty$ lies in the interior of $A$.
We use the shorthand $tA$ for the
dilation of $A$ by a factor $t>0$, and $-A$ 
for its reflection through the origin.
Similarly, $A+x$ denotes the translation of
$A$ by a vector $x\in\RR^n$. 
A convex body $B$ is {\bf homothetic} to $A$
if $B=tA+x$ for a suitable dilation and translation.
Finally, $|A|$ denotes
the volume of $A$, and $\partial A$ its boundary.

The {\bf Brunn-Minkowski inequality}
states that for any pair of non-empty convex sets $A,B\subset\RR^n$
\begin{equation}
\label{eq:BM}
|(1-t)A + tB|^{1/n} \ge (1-t)|A|^{1/n} + t |B|^{1/n}
\qquad (0\le t\le 1),
\end{equation}
in particular, if the left hand side is
finite, then the right hand side is finite as well.
Its equality cases will play a key role in our results.

Under the additional assumption
that $A,B$ are compact, 
Eq.~\eqref{eq:BM} holds with equality
for some $t\in (0,1)$, if and only if
either $A$ and $B$ are homothetic or at least
one of them is a single point.
In either case, all
of the sets $(1-t)A+tB$ for $0<t<1$ are homothetic.

We will frequently consider $t=1/2$:
For $C:=\frac12(A+B)$, the Brunn-Minkowski inequality
takes the form
\[
|C|^{1/n} \ge \frac12\left( |A|^{1/n} +|B|^{1/n}\right)\,,
\]
with equality if and only if there exist
$v\in\RR^n$ and $\lambda\in [-1,1]$ such that
\begin{equation} \label{eq:BM-equal}
A = (1-\lambda)C-v\,, \qquad 
B = (1+\lambda)C+v\,.
\end{equation}
The value of $\lambda$ is determined by the volume
ratio $|A|:|B|$, and the vector~$v$ by the distance vector
between their barycenters.

The {\bf cross covariogram} of
two convex sets $K$, $L$, is defined by
\begin{equation}
\label{def-gKL}
g_{K,L}(x) :=|K\cap (L+x)|,\qquad x\in\RR^n\,.
\end{equation}
It is always well-defined, but may
be $+\infty$ if $K$ and $L$ are unbounded.

\bigskip\noindent {\bf Standing Hypothesis.} \ 
To avoid trivialities, we generally
assume that

\smallskip
\begin{itemize}
\item $n>1$,
\smallskip
\item $K,L\subset\RR^n$ are 
non-empty closed convex sets,
\smallskip
\item there is a point $a\in\RR^n$ such that 
$0<g_{K,L}(a)<\infty$.
\end{itemize}

\smallskip 
These assumptions allow
for arbitrary pairs of convex bodies,
as well as for certain unbounded
sets such as the cylinders and cones
that were described in the introduction.
The last condition demands that $K\cap (L+a)$ is a convex body
for {\em some} choice of $a\in\RR^n$.
Since the cross covariogram is log-concave, 
it follows that it is proper and continuous, with 
$g_{K,L}(x)<\infty$ for {\em all} $x\in\RR^n$. 
When there is no danger of confusion,
we drop the subscripts
and simply write $g$ for the cross covariogram 
of $K$ and $L$.

As mentioned in the introduction, the concavity
properties of the cross covariogram 
are consequences of the 
Pr\'ekopa-Leindler inequality.
For the convenience of the  reader,
the following lemma provides a short proof 
of the special case that we need here
(corresponding to $\alpha=+\infty$, $\gamma=1/n$
in~\cite[Corollary 3.5]{BL76}).

\begin{lemma}[$1/n$-concavity] \label{lem:g1n-concave}
Let $g$ be the cross covariogram
of a pair of convex sets $K,L\subset\RR^n$
that satisfy the standing hypothesis.  Then 
$g^{1/n}$ is concave on its support.
\end{lemma}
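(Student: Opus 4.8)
The plan is to check the concavity of $g^{1/n}$ directly along an arbitrary segment in its support, using nothing more than the behavior of the intersection $K\cap(L+x)$ under Minkowski combinations, followed by one application of the Brunn--Minkowski inequality~\eqref{eq:BM}. So I would fix $x_0,x_1\in\RR^n$ with $g(x_0),g(x_1)>0$, pick $t\in(0,1)$, and set $x_t:=(1-t)x_0+tx_1$; writing $A:=K\cap(L+x_0)$ and $B:=K\cap(L+x_1)$, these are non-empty convex sets of finite volume (indeed convex bodies, since $g$ is positive there), and $g(x_t)<\infty$ by the standing hypothesis.

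The key step is the inclusion
\[
(1-t)A+tB\ \subseteq\ K\cap(L+x_t).
\]
To see it, take $y_0\in A$ and $y_1\in B$: convexity of $K$ gives $(1-t)y_0+ty_1\in K$, while $y_0-x_0\in L$ and $y_1-x_1\in L$ together with convexity of $L$ give $(1-t)(y_0-x_0)+t(y_1-x_1)\in L$, i.e.\ $(1-t)y_0+ty_1\in L+x_t$. Then monotonicity of Lebesgue measure under inclusion, followed by~\eqref{eq:BM} applied to the non-empty convex sets $A,B$, yields
\[
g(x_t)^{1/n}=\bigl|K\cap(L+x_t)\bigr|^{1/n}\ \ge\ \bigl|(1-t)A+tB\bigr|^{1/n}\ \ge\ (1-t)|A|^{1/n}+t|B|^{1/n},
\]
and the right-hand side equals $(1-t)g(x_0)^{1/n}+t\,g(x_1)^{1/n}$. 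Since $x_0,x_1,t$ were arbitrary, $g^{1/n}$ is concave on the convex set $\{g>0\}$, and since $g$ is continuous this extends to the closure, i.e.\ to $\supp g$.

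There is no genuine obstacle in this argument; the only points that warrant a line of justification are that $A,B\neq\emptyset$ — so that $(1-t)A+tB$ is a well-defined non-empty set whose volume is dominated by that of $K\cap(L+x_t)$ — which holds because $x_0,x_1\in\supp g$, and the passage from $\{g>0\}$ to its closure, which uses continuity of $g$. The same two-line computation, now run with the collinear triple $a-w,\,a,\,a+w$ and with the \emph{equality} case~\eqref{eq:BM-equal} of Brunn--Minkowski in place of the inequality, will be the engine behind the rigidity statements in Theorems~\ref{thm:constant} and~\ref{thm:affine}.
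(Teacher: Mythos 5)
Your proposal is correct and is essentially the paper's own argument: the inclusion $(1-t)\bigl(K\cap(L+x_0)\bigr)+t\bigl(K\cap(L+x_1)\bigr)\subseteq K\cap(L+x_t)$ followed by monotonicity of volume and one application of Brunn--Minkowski is exactly Eqs.~\eqref{eq:supset}--\eqref{eq:BM-St} in the paper. The only cosmetic difference is that you first prove concavity on $\{g>0\}$ and pass to the closure by continuity, whereas the paper runs the same computation directly for any $x_0,x_1$ in the support (where the sets are non-empty but may have zero volume); both are fine.
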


\begin{proof}
We need to show that 
\begin{equation}
g^{1/n}((1\!-\! t)x_0 + tx_1) 
\geq (1-t)g^{1/n}(x_0) + tg^{1/n}(x_1) 
\label{eq:g1n-concave}
\end{equation}
for every pair of points $x_0,x_1$ in
the support of $g$, and all $t\in [0,1]$.

For $t\in [0,1]$ set $x_t:= (1-t)x_0+t x_1$ 
and consider the convex sets
\[
S_t := K \cap (L+x_t).
\]
Since $x_0,x_1$ lie in the support
of $g$, both $S_0$ and $S_1$ are non-empty.
By construction, $g(x_t)=|S_t|$.
Observe that
\begin{equation}
\label{eq:supset}
S_t \supset (1 - t)S_0 + tS_1\,, 
\end{equation}
since the right hand side is contained in 
$(1 - t)K + t K = K$ by convexity of $K$, and also 
in $(1 - t)(L+x_0) + t(L+x_1) = L+x_t$ by convexity of $L$. 
Therefore, 
\begin{equation}
\label{eq:supset-volume}
|S_t|\ge |(1 - t)S_0 + tS_1|\,.
\end{equation}
By the Brunn-Minkowski inequality,
\begin{equation}
\label{eq:BM-St}
\left|(1 - t)S_0 + tS_1\right|^{1/n}
\ge  (1-t)|S_0|^{1/n} + t|S_1|^{1/n}
\end{equation}
for all $t\in [0,1]$. We combine 
Eqs.~\eqref{eq:supset-volume} and~\eqref{eq:BM-St} and recall
that $g(x_t)=|S_t|$ to arrive at Eq.~\eqref{eq:g1n-concave}.
\end{proof}

\begin{lemma}[Equality conditions]
\label{lem:equality}
In the setting of Lemma~\ref{lem:g1n-concave},
suppose that equality holds in Eq.~\eqref{eq:g1n-concave}
for some $t\in (0,1)$ and some $x_0\ne x_1$ such that
$g(x_0)$ and $g(x_1)$ are positive.
Set $x_t:=(1-t)x_0+tx_1$.
Then $K\cap (L+x_0)$, $K\cap (L+x_1)$,
and $K\cap (L+x_t)$ are homothetic.
\end{lemma}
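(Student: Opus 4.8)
The plan is to track where equality could be lost in the chain of inequalities used to prove Lemma~\ref{lem:g1n-concave}. Write $S_s:=K\cap(L+x_s)$ for $s\in\{0,t,1\}$. Equality in Eq.~\eqref{eq:g1n-concave} forces equality in both~\eqref{eq:supset-volume} and~\eqref{eq:BM-St}, since these are the only two inequalities entering the proof and both point the same way.

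First I would record that $S_0$ and $S_1$ are compact convex sets of positive volume: they are closed and convex by construction, $g(x_0),g(x_1)>0$ makes their volumes positive, and a convex set of finite positive volume is bounded, hence compact. In particular neither is a single point, so equality in the Brunn--Minkowski inequality~\eqref{eq:BM-St} forces, by the characterization recalled in Section~\ref{sec:prelims}, that $S_0$ and $S_1$ are homothetic, say $S_1=cS_0+w$ with $c>0$ and $w\in\RR^n$. Then
\[
(1-t)S_0+tS_1=\bigl((1-t)+tc\bigr)S_0+tw
\]
is homothetic to $S_0$ (and hence to $S_1$); being a Minkowski combination of compact sets of positive volume, it is itself a convex body.

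It remains to identify $S_t$ with $(1-t)S_0+tS_1$. The inclusion $S_t\supset(1-t)S_0+tS_1$ is Eq.~\eqref{eq:supset}, and equality in~\eqref{eq:supset-volume} gives $|S_t|=|(1-t)S_0+tS_1|<\infty$; since $S_t$ is closed, convex, and of finite positive volume, it too is a convex body. The crux of the argument --- the one step I expect to need genuine care --- is the elementary fact that a convex body cannot strictly contain another convex body of the same volume. I would prove this by passing to interiors: the interior of $(1-t)S_0+tS_1$ is contained in the interior of $S_t$, and since a convex body and its interior have the same volume, these two open sets have equal volume; their difference is then an open set of volume zero, hence empty, and taking closures gives $S_t=(1-t)S_0+tS_1$. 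Therefore $S_0$, $S_t$, and $S_1$ are all homothetic, which is the assertion.
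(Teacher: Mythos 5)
Your proposal is correct and follows the same route as the paper: force equality in both \eqref{eq:supset-volume} and \eqref{eq:BM-St}, upgrade the volume equality to the set identity $S_t=(1-t)S_0+tS_1$, and invoke the equality characterization of Brunn--Minkowski. The paper's proof is terser (it asserts the set identity and the homothety directly), whereas you supply the supporting details --- compactness of $S_0,S_1$, exclusion of the single-point case, and the ``equal volume plus inclusion implies equality'' step --- all of which are accurate.
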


\begin{proof} 
Equality in Eq.~\eqref{eq:g1n-concave}
implies that the inclusion relation~\eqref{eq:supset} 
is an identity,
\[
S_t = (1-t)S_0+ t S_1,
\]
and Eq.~\eqref{eq:BM-St} holds with equality. 
The claim now follows from the characterization
of the equality cases in the Brunn-Minkowski inequality.
\end{proof}

Since $g^{1/n}$ is concave on its support,
equality in Eq.~\eqref{eq:g1n-concave} for some $t\in (0,1)$
implies that $g$ is affine on the line segment
that connects $x_0$ to $x_1$. By
Lemma~\ref{lem:equality} there exist a convex body $C \subset \RR^n$
such that
$$
K\cap (L+x_t) = \gamma_t C+ v_t,
$$
where $v_t\in\RR^n$ and $\gamma_t\ge 0$
are affine functions of $t$.
The next lemma concerns the special case
of a line segment centered at the origin.

\begin{lemma} [Consequences of Lemma~\ref{lem:equality}]
\label{lem:affine}
Let $g:=g_{K,L}$ be the cross covariogram
of a pair of bodies satisfying the
standing hypothesis, and let
$w$ be a non-zero vector such that $\pm w$ lies in the 
support of $g$. If
\[
g^{1/n}(0) = \frac12\left( g^{1/n}(-w) + g^{1/n}(w)\right)>0,
\]
then there exist $v\in\RR^n$ 
and $\lambda\in [-1,1]$ such that
\begin{equation}
\label{eq:tv}
(1+t\lambda)(K \cap L) = (K-tv)\cap (L+t(w\!-\!v)),\qquad (-1\le t\le 1).
\end{equation}
\end{lemma}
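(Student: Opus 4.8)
The plan is to apply the equality case \eqref{eq:BM-equal} of Brunn--Minkowski at the midpoint of the segment, and then to transport the resulting homothety to every $t\in[-1,1]$. Throughout I write $S_t:=K\cap(L+tw)$, so $g(tw)=|S_t|$, and abbreviate $A:=S_{-1}$, $B:=S_{1}$, $C:=S_0=K\cap L$. Since $g$ is finite everywhere (standing hypothesis and log-concavity) and $|C|=g(0)>0$, the set $C$ is a convex body. The hypothesis is precisely that equality holds in \eqref{eq:g1n-concave} at $t=\tfrac12$ for $x_0=-w$, $x_1=w$ (whose midpoint is $0$, where $g>0$), so by the proof of Lemma~\ref{lem:equality} equality in \eqref{eq:g1n-concave} forces the inclusion \eqref{eq:supset} to be an identity and \eqref{eq:BM-St} to be an equality, i.e. $C=\tfrac12(A+B)$ and $|C|^{1/n}=\tfrac12\bigl(|A|^{1/n}+|B|^{1/n}\bigr)$. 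Because $C$ is bounded and $C=\tfrac12(A+B)$, neither $A$ nor $B$ can contain a line or ray, so both are nonempty compact convex sets of finite volume, and \eqref{eq:BM-equal} supplies $v\in\RR^n$ and $\lambda\in[-1,1]$ with $A=(1-\lambda)C-v$ and $B=(1+\lambda)C+v$.

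Next I would transport this to the whole segment. The support of $g$ is convex and contains $0$ and $\pm w$, hence contains $[-w,w]$; since $g^{1/n}$ is concave on its support and meets its chord at the midpoint of $[-w,w]$, it is affine on $[-w,w]$, and being non-negative there and positive at $0$ it follows that $g(tw)>0$ for $-1<t<1$. Fix such a $t$ and put $s:=\tfrac{1+t}{2}\in(0,1)$, so $tw=(1-s)(-w)+s\,w$. Substituting $A=(1-\lambda)C-v$ and $B=(1+\lambda)C+v$ and using the elementary identity $aD+bD=(a+b)D$, valid for any convex $D$ and $a,b\ge0$, the rescaled copies of $C$ merge:
\[
(1-s)A+s\,B=\bigl[(1-s)(1-\lambda)+s(1+\lambda)\bigr]C+(2s-1)\,v=(1+t\lambda)\,C+t\,v,
\]
the last equality being the substitution $2s-1=t$. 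On the other hand $(1-s)A+s\,B\subseteq S_t$ by \eqref{eq:supset}, and taking volumes and invoking affineness of $g^{1/n}$ on $[-w,w]$ gives $|S_t|=g(tw)=(1+t\lambda)^n|C|=|(1-s)A+s\,B|$; since this common volume is positive, the inclusion is an equality. Hence $S_t=(1+t\lambda)C+tv$, and translating by $-tv$ turns this into
\[
(1+t\lambda)(K\cap L)=S_t-tv=(K-tv)\cap\bigl(L+t(w-v)\bigr),
\]
which is \eqref{eq:tv} for $-1<t<1$; the cases $t=\pm1$ follow at once from $A=(1-\lambda)C-v$ and $B=(1+\lambda)C+v$.

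The step I expect to be the main obstacle is the passage from equality at the single midpoint to equality along the whole segment, together with the attendant need to keep the intersections $S_t$ non-degenerate for $|t|<1$; both follow from the fact that a concave function meeting its chord at an interior point coincides with that chord on the entire interval, combined with non-negativity of $g^{1/n}$. The other point to isolate is the collapse $aD+bD=(a+b)D$: this is exactly where convexity of $K\cap L$ (rather than of $K$ and $L$ individually) is used, and it is what turns the Minkowski combination of scaled translates of $C$ back into a single homothet of $C$ with coefficients affine in $t$. The degenerate cases $g(\pm w)=0$, where $A$ or $B$ shrinks to a point, and the boundedness bookkeeping are absorbed automatically through the allowed range $\lambda\in[-1,1]$ in \eqref{eq:BM-equal}.
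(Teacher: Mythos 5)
Your proof is correct and follows essentially the same route as the paper: both reduce to the equality case \eqref{eq:BM-equal} of Brunn--Minkowski applied to $S_{-1}$, $S_1$, $S_0=K\cap L$, and then propagate the resulting homothety to all $t\in[-1,1]$. The only difference is cosmetic: where the paper invokes Lemma~\ref{lem:equality} at each $t$ and asserts that the homothety parameters depend affinely on $t$, you make that dependence explicit by computing $(1-s)A+sB=(1+t\lambda)C+tv$ and matching volumes, which also handles the endpoints $t=\pm1$ directly from \eqref{eq:BM-equal}.
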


\begin{proof}
Since $g(0)>0$, the intersection $K\cap L$ has non-empty interior.
Since $g$ is affine and nonnegative on the line segment,
it is positive except possibly at one endpoint.
Lemma~\ref{lem:equality} implies that
the sets $K\cap (L+tw)$ are homothetic
for $-1< t<1$. This means that
there exists $v\in\RR^n$ and $\lambda\in [-1,1]$
such that
\[
K\cap (L+tw)= (1+t\lambda)(K\cap L)+tv ,\qquad (-1\le t\le 1),
\]
see Eq.~\eqref{eq:BM-equal}.
Note that $v$ and $\lambda$ are uniquely determined
by our choice of $K\cap L$ as the
reference body.  Eq.~\eqref{eq:tv} follows upon
translation by $-tv$ and switching sides. 
\end{proof}

The geometric implications of Eq.~\eqref{eq:tv} 
for the shape of $K$ and $L$
will be discussed in Lemma~\ref{lem:cylinder} 
(for $\lambda=0$) and Lemma~\ref{lem:cones} 
(for $\lambda>0$) below. 
We end this section with a simple observation that 
will play a role in the proof of Theorem~\ref{thm:global}.

\begin{lemma}[Intersection of boundaries]\label{lem:dKdL}
Let $K,L\subset \RR^n$ be closed convex
sets sets such that $K\cap L$ is compact and non-empty.
For $n>1$, if $\partial K\cap \partial L$ is empty,
then one the bodies contains the other in its interior.
\end{lemma}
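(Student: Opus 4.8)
The plan is to argue by contrapositive: assuming neither body contains the other in its interior, I will produce a point in $\partial K\cap\partial L$. First I would observe that since $K\cap L$ is compact and non-empty with non-empty interior (the standing hypothesis guarantees this for a suitable translate, but here we only need $K\cap L\ne\emptyset$; in fact we should assume $K\cap L$ has non-empty interior, or handle the lower-dimensional case separately), there is an interior point $p\in\operatorname{int}(K\cap L)=\operatorname{int}K\cap\operatorname{int}L$. If $K\not\subset\operatorname{int}L$, there is a point $q\in K\setminus\operatorname{int}L$; by convexity the segment $[p,q]\subset K$, and moving along it from $p$ toward $q$ one crosses $\partial L$ at some point $z$, which lies in $K$. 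The issue is whether $z\in\partial K$ or $z\in\operatorname{int}K$.

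The key step is to rule out the possibility that every point of $K\cap\partial L$ lies in $\operatorname{int}K$ while simultaneously every point of $\partial K\cap L$ lies in $\operatorname{int}L$. Suppose both hold. Then $K\cap\partial L$ is a relatively open subset of $\partial L$ (each of its points has a neighbourhood in $\partial L$ still inside the open set $\operatorname{int}K$), and it is also closed in $\partial L$ (being $K\cap\partial L$ with $K$ closed), hence clopen in the connected set $\partial L$ (here $n>1$ is used: $\partial L$ of a convex set with non-empty interior is connected when $n\ge 2$). So $K\cap\partial L$ is either empty or all of $\partial L$. If it is all of $\partial L$, then $\partial L\subset K$, and since $K$ is convex this forces $L\subset K$; combined with $\partial L\subset\operatorname{int}K$ and convexity one gets $L\subset\operatorname{int}K$, contradicting our assumption. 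If it is empty, then $\partial L\cap K=\emptyset$, so $K$ (being connected, and meeting $\operatorname{int}L$ at $p$) lies entirely in $\operatorname{int}L$, i.e.\ $K\subset\operatorname{int}L$, again a contradiction. Symmetrically the assumption on $\partial K\cap L$ leads to $L\subset\operatorname{int}K$ or $K\subset\operatorname{int}L$. Hence in the remaining case at least one of $K\cap\partial L$ or $\partial K\cap L$ must contain a point of the other boundary, giving $\partial K\cap\partial L\ne\emptyset$.

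More carefully, I expect the cleanest route is: assume $\partial K\cap\partial L=\emptyset$; then $\partial L$ is covered by the two disjoint relatively open sets $\partial L\cap\operatorname{int}K$ and $\partial L\setminus K$ (the latter open since $K$ is closed, the former open since $\partial L\cap\partial K=\emptyset$ means $\partial L\cap K=\partial L\cap\operatorname{int}K$). By connectedness of $\partial L$ (valid for $n>1$), one of these is empty. If $\partial L\subset\operatorname{int}K$, convexity of $K$ gives $L=\operatorname{conv}(\partial L)\subset\operatorname{int}K$ — using that $\operatorname{int}K$ is convex and, if $L$ is unbounded, that $L\subset K$ follows from $\partial L\subset K$ plus $K$ closed convex, then upgrade to $\operatorname{int}K$. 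If $\partial L\cap K=\emptyset$, then since $K$ is connected and $K\cap\operatorname{int}L\ne\emptyset$ while $K$ cannot cross $\partial L$, we get $K\subset\operatorname{int}L$. Either way one body is in the interior of the other.

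The main obstacle is the connectedness input and the unbounded case. For a convex body $\partial L$ is homeomorphic to $S^{n-1}$, hence connected for $n\ge 2$; for unbounded closed convex sets with non-empty interior $\partial L$ is still connected (it deformation-retracts appropriately), but one should cite this carefully or restrict the argument to the compact set $K\cap L$ is enough if we intersect with a large ball — actually the cleanest fix is to note we only need connectedness of $\partial L$, which holds for any closed convex set in $\RR^n$ with non-empty interior when $n\ge 2$. A second subtlety is the degenerate case where $K\cap L$ has empty interior: then $K\cap L$ is a lower-dimensional compact convex set, its relative interior points lie on $\partial K$ and on $\partial L$ simultaneously, so $\partial K\cap\partial L\ne\emptyset$ directly, and neither containment-in-interior can hold — so the hypothesis ``$\partial K\cap\partial L$ empty'' already fails, and there is nothing to prove. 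I would dispose of this case first in one line, then run the connectedness argument assuming $\operatorname{int}(K\cap L)\ne\emptyset$.
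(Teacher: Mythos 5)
Your overall strategy -- decompose a connected boundary into two disjoint relatively open (or closed) pieces, conclude one is empty, and read off a containment -- is exactly the idea of the paper's proof. The difference is \emph{which} boundary you apply it to: you use $\partial L$ (and $\partial K$), whereas the paper uses $\partial(K\cap L)$, writing $\partial(K\cap L)=(\partial K\cap L)\cup(K\cap\partial L)$ as a union of two closed sets whose intersection is $\partial K\cap\partial L=\emptyset$. For compact $K,L$ with non-empty interior your version is correct and essentially equivalent. But the lemma is stated for closed convex sets that may be unbounded (only $K\cap L$ is assumed compact), and this is where your argument has a genuine gap. Your proposed fix rests on the claim that ``connectedness of $\partial L$ \ldots holds for any closed convex set in $\RR^n$ with non-empty interior when $n\ge 2$''; this is false: for the slab $L=\{x\in\RR^2: 0\le x_1\le 1\}$, the boundary is two disjoint parallel lines. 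Likewise, your step ``$\partial L\subset \operatorname{int}K$, hence $L=\operatorname{conv}(\partial L)\subset\operatorname{int}K$'' and the fallback ``$\partial L\subset K$ plus $K$ closed convex gives $L\subset K$'' both fail for unbounded $L$ (a half-space has $\operatorname{conv}(\partial L)=\partial L$, and a slab $K$ can contain $\partial L$ without containing $L$). These configurations can be excluded using the compactness of $K\cap L$, but that requires an additional argument you have not supplied. The paper sidesteps all of this by running the connectedness argument on $\partial(K\cap L)$, which is compact by hypothesis and connected for $n>1$ (homeomorphic to $S^{n-1}$ in the full-dimensional case, equal to the convex set $K\cap L$ itself in the degenerate case), and then deduces the containment by following rays from an interior point of $K\cap L$.

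A smaller point: your one-line disposal of the degenerate case asserts that when $K\cap L$ has empty interior its relative interior points lie on $\partial K\cap\partial L$. That is true when both $K$ and $L$ have non-empty interior (disjoint interiors give a separating hyperplane, and $K\cap L$ lies in it, hence in both boundaries), but you should say so, since the statement as written allows, say, $L$ to be a segment strictly inside a ball $K$, where $K\cap L=L$ has empty interior yet meets $\partial K$ nowhere; there the lemma's conclusion still holds, but not by your stated reason.
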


\begin{proof} Suppose that $K\cap L\ne \emptyset$ but
$\partial K\cap \partial L=\emptyset$. 
We will argue that either $K$ contains $L$ in its
interior, or vice versa.

Consider first the case where
the interior of $K\cap L$ is non-empty,
and assume (by translation) that it contains the origin.
The boundary of the convex body $K\cap L$ is homeomorphic to 
the unit sphere in $\RR^n$, which is connected for $n>1$.
On the other hand, $\partial (K\cap L)$
is a union two closed sets,
\[
\partial (K\cap L) = (\partial K\cap L )\cup (K\cap \partial L)
\]
whose intersection equals $\partial K\cap\partial L=\emptyset$.
By connectedness, one of them is empty. 
If $\partial K\cap L=\emptyset$ 
then every ray emanating from
the origin meets $\partial L$ strictly 
before it meets $\partial K$.
In this case $L$ is contained in the interior of $K$. 
If, instead, $K\cap\partial L=\emptyset$
then $K$ is contained in the interior of $L$. 

In the case where $K\cap L$ is has no interior, 
$\partial(K\cap L)=K\cap L$ is 
convex, hence connected, and we have the same conclusion.

\end{proof}

\section{Strictly convex bodies}
\label{sec:global}

\begin{proof}[Proof of Theorem~\ref{thm:global}]
Let $g$ be the cross covariogram of 
two strictly convex bodies $K, L$ in $\RR^n$.
Under the assumption that $\max g<\min\{|K|,|L|\}$,
we want to show that $g^{1/n}$ is strictly
concave on its support.

Suppose, for the contrary, 
that $g^{1/n}$ is affine along some
line segment of positive length,
\begin{equation}
\label{eq:affine-1/2}
g^{1/n}(a\pm w) = \frac12\left(g^{1/n}(a-w) + g^{1/n}(a+w)\right)\,.
\end{equation}
Here, $a$ is the midpoint of the line segment, and
$w\ne 0$ is twice the distance vector between its endpoints.
By translating $L$ we may take $a=0$.
By strict convexity of the sets $K$ and $L$,
the origin lies in the interior of $K + (-L)$,
where $g$ is strictly positive.  It follows from
Lemma~\ref{lem:affine} that
Eq.~\eqref{eq:tv} holds for some $v\in\RR^n$ and 
$\lambda \in [-1,1]$.
We will show that 
necessarily $w=0$, contrary to the choice of~$w$.

Since $0<g(0) < \min \{\abs{K}, \abs{L}\}$ and $n > 1$,
by Lemma~\ref{lem:dKdL} there exists
a point $z\in \partial K\cap \partial L$.
We express this point as a convex combination in two ways,
\begin{align*}
z&=\ \frac12((1-\lambda)z-v) + 
\frac12 ((1+\lambda z)+v)\\
& = \ \frac{1}{2} ((1-\lambda)z-(v\!-\!w)) + 
\frac12 ((1+\lambda)z+(v\!-\!w)).
\end{align*}

By Eq.~\eqref{eq:tv}, the points $z \pm (\lambda z+v)$
lie in $K$, while $z\pm (\lambda z +(v\!-\!w))$ lie in~$L$.
Since $z$ is an extreme point of both $K$ and $L$
by the assumption of strict convexity, these
convex combinations are trivial,
that is, $v =w-v=0$. 
This proves that strict $1/n$-concavity
cannot fail on a line segment of positive length.
\end{proof}

It is clear from the proof that the assumptions 
of Theorem~\ref{thm:global} 
can be relaxed in various ways. The conclusions
remain valid for any pair
of convex sets that satisfies the standing hypothesis, 
so long as neither set contains a translate of the other.
It follows from Lemmma~\ref{lem:dKdL} that
$\partial K\cap \partial(L+x)\ne \emptyset$ on the support of~$g$.

In the case where $\max g=\min\{|K|,|L|\}$,
strict convexity of $K$ and $L$ still implies {\em local} strict
concavity of $g^{1/n}$ in the neighborhood of
any point $x$ with $0<g(x)<\min\{|K|,|L|\}$.

\section{$g_{K,L}$ constant on a line segment} 
\label{sec:cylinder}

\noindent Let $K, L$ be convex sets
satisfying the standing hypothesis.
In this section, we consider the
situation where their cross covariogram $g$ 
equals a positive
constant along a line segment with endpoints $a\pm w$.
This is related it to the cylindrical scenario described in the
introduction. As in Section~\ref{sec:global}, we take
$a=0$.

Let $\lambda=0$.  For $v\not\in \{0, w\}$, Lemma~\ref{lem:affine}
guarantees that

\smallskip
\begin{itemize}
        \item 
the line segment with endpoints $z\pm v$ lies in $K$, and
\smallskip
        \item the line segment with
endpoints $z\pm (v\!-\!w)$ lies in $L$,
    \end{itemize}
\smallskip
see Eq.~\eqref{eq:tv} with $\lambda=0$.
If $v=0$ or $v=w$, the corresponding segment degenerates
to a point.

The next lemma describes the first alternative
in terms of the cylinder in the direction of $v$ generated by $K$.
Note that its conclusion holds vacuously also 
for $v=0$, where $K'=K$. 

\begin{lemma}
\label{lem:cylinder}
Assume that $K\cap L\ne \emptyset$, and 
let $v\ne 0$ be a vector in  $\RR^n$. 
If $z\pm v\in K$ for every $z\in K\cap L$, then
$K\cap L=K'\cap L$, where 
\[
K':=\left\{x+tv\mid x\in K, t\in\RR\right\}
\]
is the cylinder in the direction of
$v$ generated by $K$.
\end{lemma}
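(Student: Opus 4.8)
The plan is to prove both inclusions $K\cap L\subseteq K'\cap L$ and $K'\cap L\subseteq K\cap L$. Since trivially $K\subseteq K'$, the first inclusion is immediate, so the entire content is in showing $K'\cap L\subseteq K\cap L$; equivalently, that no point of $K'$ lying inside $L$ can fail to lie in $K$. Suppose for contradiction that some $y\in K'\cap L$ has $y\notin K$. Writing $y=x+tv$ with $x\in K$, $t\ne 0$, the point $y$ lies on a line in direction $v$ through $K$; let $I$ be the (closed, possibly degenerate on the far side) maximal segment $\{x+sv:x+sv\in K\}$, which is a chord of $K$ with endpoints on $\partial K$. The point $y$ lies on the line through $I$ but strictly outside $I$ on one side; say $y=p+rv$ with $p$ the corresponding endpoint of $I$ and $r>0$.

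The key step is to produce a point of $K\cap L$ at which the hypothesis $z\pm v\in K$ is violated, using convexity of $L$. First I would use $y\in L$ together with a point of $K\cap L$ to manufacture a point of $K\cap L$ close to the endpoint $p$: pick any $q$ in the (nonempty, by hypothesis) set $K\cap L$. For $\mu\in(0,1)$ the point $q_\mu:=(1-\mu)q+\mu y$ lies in $L$ by convexity. As $\mu\to 1$ we approach $y\notin K$, and as $\mu\to 0$ we are at $q\in K$; I expect that by choosing $\mu$ appropriately — or rather by a slightly different barycentric combination involving a point of $K$ near $p$ — one lands at a point $z^*\in K\cap L$ which lies on $\partial K$ on the $v$-side, i.e.\ with $z^*+\varepsilon v\notin K$ for all $\varepsilon>0$. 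Then $z^*+v\notin K$ contradicts the hypothesis applied to $z^*$. Concretely: since $I$ is a segment in direction $v$ with endpoint $p\in\partial K$ and $y=p+rv$ is on the outward $v$-side, the segment $[q,y]$ meets $\partial K$ at some point $z^*$ (because $q\in K$, $y\notin K$, and $K$ is closed and convex). This $z^*$ lies in $L$ (convexity of $L$, since $q,y\in L$) hence $z^*\in K\cap L$; and because $z^*$ is the last point of $[q,y]\cap K$ while the segment continues toward $y$ in a direction having positive $v$-component relative to the chord structure — here one must check the direction $y-z^*$ is "outward" at $z^*$ in the sense that $z^*+\delta(y-z^*)\notin K$ for small $\delta>0$, which holds since $z^*$ is the exit point — one deduces $z^*+v\notin K$, because the whole segment $[z^*,z^*+v]$ would otherwise lie in $K$ by convexity and then by convexity again $y$ or points past $z^*$ would re-enter $K$, contradiction. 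This contradicts $z^*\pm v\in K$.

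The main obstacle I anticipate is the geometric bookkeeping in the previous paragraph: ensuring that the exit point $z^*$ of $[q,y]$ from $K$ really does witness a violation of $z^*+v\in K$ (rather than $z^*-v\notin K$, which is not what the hypothesis forbids at that point). The cleanest way around this is probably not to use an arbitrary $q$, but to use the structure of the chord $I$ directly: let $p_-,p_+$ be the two endpoints of $I$, so $p_\pm\in\partial K$, and $y=p_++rv$ with $r>0$ (WLOG). If $p_+\in K\cap L$ we are done immediately since then $p_++v$ must lie in $K$ by hypothesis, forcing $p_++v\in I\subseteq K$ on the right of $p_+$ — contradiction with $p_+$ being the right endpoint (provided $r\le 1$; if $r>1$ rescale). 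If $p_+\notin L$, then since $y\in L$ and $L$ is convex, the segment from some point of $K\cap L$ to $y$ crosses into the region near $p_+$, and one argues as above; the safest route is: the set $K\cap L$ is convex, contains a point $q$, and contains $y$'s "shadow" — more precisely, consider that $y\in L$ and $p_+\in K$, and along $[p_+,y]$ (direction $v$, length $r$) the set $L$ is entered; intersect with $[q, \cdot]$. I will write this out choosing the combination so that the witnessed boundary point has $z^*+v$ lying strictly beyond $K$ along direction $v$, completing the contradiction. Once $K'\cap L\subseteq K$ is established, combined with $K'\cap L\subseteq L$ (trivial) and $K\cap L\subseteq K'\cap L$ (trivial from $K\subseteq K'$), we get $K\cap L=K'\cap L$, as claimed.
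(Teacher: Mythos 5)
Your setup follows the paper's proof quite closely up to the decisive step: the only content is the inclusion $K'\cap L\subseteq K$, and for $y\in (K'\cap L)\setminus K$ and $q\in K\cap L$ the exit point $z^*$ of the segment $[q,y]$ from $K$ lies in $\partial K\cap L\subseteq K\cap L$ (by convexity of $L$), so the hypothesis gives $z^*\pm v\in K$. The genuine gap is exactly the step you flag yourself and then defer (``I will write this out\dots''): you never derive a contradiction from this. Your stated reason for ``$z^*+v\notin K$'' --- that otherwise ``points past $z^*$ would re-enter $K$ by convexity'' --- is not a proof. The segment $[q,y]$ leaves $K$ in the direction $y-q$, which is in general not parallel to $v$, and the convex hull of $\{q,z^*,z^*+v\}$ contains no point of $(z^*,y]$ (in coordinates adapted to the plane spanned by $y-q$ and $v$, all its points lie on the $q$-side of $z^*$). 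To make ``re-enter'' precise you would need a second anchor point of $K$ lying beyond $z^*$ in the $q\to y$ direction --- the chord endpoint $p_+$ works, since $y$ sits past $p_+$ in the $+v$ direction --- together with a crossing argument showing that the segment $[z^*+v,\,p_+]\subset K$ meets the forbidden open segment $(z^*,y)$. None of this appears in the write-up; your fallback paragraph handles only the easy case $p_+\in L$ and trails off for $p_+\notin L$. (A side remark: your worry that the violation might be $z^*-v\notin K$, ``which is not what the hypothesis forbids,'' is a misreading --- the hypothesis forbids both $z^*+v\notin K$ and $z^*-v\notin K$; the issue is that you establish neither.)

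The paper closes this step with one observation your argument is missing: take any supporting hyperplane $H$ of $K$ at the exit point. Since the exit point lies on $H$ while both of its translates by $\pm v$ lie in $K$, hence in the closed half-space bounded by $H$, the hyperplane $H$ must contain the entire line through that point in direction $v$; consequently $H$ supports not only $K$ but the whole cylinder $K'$, and it separates the outside point from $K'$, contradicting $y\in K'$. I recommend you either adopt this hyperplane argument or carry out in full the two-dimensional crossing argument sketched above (using $p_+$ as the second anchor); as it stands, the proposal does not prove the lemma.
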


\begin{proof} 
By construction, $K\subset K'$, and hence
$(K\cap L)\subset (K'\cap L)$.

For the reverse inclusion, consider
the complement of $K$ in $L$.
If $L\setminus K=\emptyset$, we are done. 
Otherwise let $z\in L\setminus K$, and choose
$y\in K\cap L$.  The line from $z$ to $y$ 
intersects $\partial K\cap L$
in some point, $z'$. Let $H$ be a supporting hyperplane for
$K$ at~$z'$.

Since $K$ contains the line segment with endpoints
$z'\pm v$, the same is true for $H$.
Therefore every line in the direction of $v$
lies either in $H$, or in one of the two half-spaces.
It follows that $H$ strictly separates $z$ from $K'$.
Since $z$ was arbitrary, this proves
that $(L\setminus K)\subset (L\setminus K')$,
that is, $(K\cap L)\supset (K'\cap L)$.
\end{proof}

\begin{proof}[Proof of Theorem~\ref{thm:constant}] 
By assumption, the covariogram 
$g$ is constant and strictly positive on some
line segment with endpoints $a\pm w$. Using a suitable 
translation, we take $a=0$. 

By Lemma~\ref{lem:affine} there exists 
$v\in\RR^n$ such that 
$K\cap L$ is a subset of both
$K-tv$ and $L+t(w\!-\!v)$ for every $t\in [-1,1]$.
Note that $\lambda=0$ in Eq.~\eqref{eq:tv}, because $g$ is constant
on the line segment.
By construction of $K'$ and $L'$, we have the chain of
inclusions
\begin{equation}
\label{eq:cylinder-inclusion}
K\cap L \  \subset \  (K-sv)\cap (L+t(w\!-\!v))\ \subset \ K'\cap L'
\end{equation}
for all $s,t\in [-1,1]$.

Since $g(0)>0$, the intersection
$K\cap L$ is non-empty.
We apply Lemma~\ref{lem:cylinder}
twice (once as stated, and the second time with
$L$, $K'$, and $v-w$ in place of $K$, $L$, and $v$)
to see that 
\[
K\cap L = K'\cap L= K'\cap L'. 
\]
Thus the three intersections
in  Eq.~\eqref{eq:cylinder-inclusion} are equal.
In particular,
\begin{equation}
\label{eq:proof-st}
(K-sv)\cap (L + t(w\!-\!v)) = K'\cap L'\,,\qquad (-1\le s,t\le 1).
\end{equation}
To complete the proof, we set $s=t$ and translate by $tv$.
On the right hand side we use 
that $K'+tv=K'$ and $L'+tv = L'+tw$
by the translation symmetries of $K'$ and $L'$.
This proves Eq.~\eqref{eq:KL-prime}
\end{proof}

\begin{remark}
\label{remark:st}
As a consequence of
Eq.~\eqref{eq:proof-st}, $g$ is constant on the convex hull of
the points $a\pm v\pm (w\!-\!v)$.
If $v\not\in \{0,w\}$, this is a non-degenerate parallelogram.
\end{remark}

\section{$g_{K,L}^{1/n}$ non-constant affine on a line segment} 
\label{sec:cones}

\noindent Let $K, L$ be convex sets
satisfying the standing hypothesis,
whose cross covariogram, $g$, is affine 
(and non-constant) on a line segment.
We relate this situation to the two-cones scenario 
described in the introduction.
As before, we center the line segment at the origin.

If $g$ is affine 
and strictly increasing on the line segment from $-w$ to $w$,
then Lemma~\ref{lem:affine} guarantees that
there exist $v\in\RR^n$ and $\lambda\in (0,1]$
such that

\smallskip
\begin{itemize}
        \item 
the line segment with endpoints $z\pm(\lambda z+ v)$ lies in $K$, and
\smallskip
        \item the line segment with
endpoints $z\pm (\lambda z + (v\!-\!w))$ lies in $L$,
    \end{itemize}
\smallskip
see Eq.~\eqref{eq:tv}.
These segments lie on lines passing
through $-\frac1\lambda v$ and  $\frac1\lambda(w\!-\!v)$, 
respectively.  The next lemma describes these families of
lines in terms of convex cones.

\begin{lemma} 
\label{lem:cones}
Assume that
$K\cap L\ne \emptyset$.
Let $v$ be a vector in  $\RR^n$ and $\lambda\in (0,1]$.
If $z\pm (\lambda z +v)\in K$ for
every $z\in K\cap L$, then $K\cap L=K''\cap L$, where 
\[
K'':=\left\{-\tfrac1\lambda v +t(x+\tfrac1\lambda v)
\mid x\in K, t\ge 0 \right\}
\]
is the cone with vertex $-\frac1\lambda v$ generated by $K$.
\end{lemma}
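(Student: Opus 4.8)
The plan is to mirror the structure of the proof of Lemma~\ref{lem:cylinder}, replacing translation symmetry in the direction $v$ by dilation symmetry from the vertex $p:=-\frac1\lambda v$. Since $\lambda\in(0,1]$, I may write the map $z\mapsto z\pm(\lambda z+v)=(1\pm\lambda)z\pm v$ as $z\mapsto p+(1\pm\lambda)(z-p)$, i.e. a dilation centered at $p$ with ratios $1-\lambda$ and $1+\lambda$. Thus the hypothesis says: for every $z\in K\cap L$, the two dilates of $z$ from $p$ with these ratios lie in $K$; by convexity the whole segment on the line through $p$ and $z$ between these dilates lies in $K$, and $K''$ is exactly the union over $z\in K$ of the closed rays from $p$ through $z$. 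In particular $K\subset K''$, hence $K\cap L\subset K''\cap L$, which is the easy inclusion.

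For the reverse inclusion I would argue by a separation argument exactly as in Lemma~\ref{lem:cylinder}. Suppose $L\setminus K\neq\emptyset$ (otherwise there is nothing to prove) and pick $z\in L\setminus K$ together with some $y\in K\cap L$. The segment $[z,y]$ meets $\partial K\cap L$ at a point $z'$; let $H$ be a supporting hyperplane of $K$ at $z'$. The key geometric claim is that $H$ must contain the vertex $p$, equivalently that $H$ cannot strictly separate $p$ from $z'$. Indeed, $z'\in K\cap L$, so by hypothesis the point $p+(1+\lambda)(z'-p)$ lies in $K$ on the far side of $z'$ from $p$ along the ray, while $p+(1-\lambda)(z'-p)$ lies in $K$ on the near side; since $z'$ lies strictly between these two points (as $\lambda>0$) and both lie in $K$ on one side of $H$ with $z'\in H$, the only possibility is that the entire line through $p$ and $z'$ lies in $H$, so $p\in H$. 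Then $H$ is a hyperplane through the vertex $p$ supporting $K$ at $z'$; since $K''$ is generated by rays from $p$ through points of $K$, and all of $K$ lies in the closed half-space bounded by $H$ that contains it, every such ray also lies in that half-space, so $K''$ lies in the closed half-space bounded by $H$. Because $z\notin K$ and $z$ lies on the segment from $z'$ towards $y\in K\cap L\subset K$, the point $z$ is on the opposite (open) side of $H$, so $H$ strictly separates $z$ from $K''$; in particular $z\notin K''$. As $z\in L\setminus K$ was arbitrary, $L\setminus K\subset L\setminus K''$, i.e. $K''\cap L\subset K\cap L$.

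Combining the two inclusions gives $K\cap L=K''\cap L$, as claimed. I would also remark, as in the cylinder case, that the conclusion is vacuous when $p\in K$ (then $K''=K$), and more generally that it imposes a genuine constraint only on the part of $\partial K$ that meets $L$: with every $z'\in\partial K\cap L$, the boundary of $K$ contains a segment of the line joining $z'$ to the vertex $p$.

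The main obstacle, and the step that deserves the most care, is the claim that the supporting hyperplane $H$ at $z'$ passes through the vertex $p$. One must be careful that $z'$ is a point of $K\cap L$ (so the hypothesis applies at $z'$) and genuinely lies in the relative interior of the segment joining its two dilates — this uses $\lambda>0$ strictly, which is why the statement excludes $\lambda=0$ and defers that case to Lemma~\ref{lem:cylinder}. A secondary subtlety is that $K$, $L$, and $K''$ may be unbounded, so phrases like ``every ray from $p$ through $K$'' and the strict separation of $z$ must be justified without appealing to compactness; the convexity of $K$ together with $z'\in\partial K$ and the existence of the supporting hyperplane $H$ (which exists at any boundary point of a closed convex set) is enough, and strictness of the separation of $z$ follows because $z$ lies strictly between $z'\in H$ and a point of $K$ on one side of $H$, hence on the open other side.
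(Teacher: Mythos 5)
Your proof takes the same route as the paper's: the published argument for Lemma~\ref{lem:cones} consists of the instruction to repeat the proof of Lemma~\ref{lem:cylinder}, plus the observation that a supporting hyperplane of $K$ at a point of $(\partial K)\cap L$ contains a segment of a line through the vertex $p=-\frac1\lambda v$ and is therefore also a supporting hyperplane for $K''$. You supply exactly this ingredient, correctly: the points $z'\pm(\lambda z'+v)$ are the dilates of $z'$ from $p$ with ratios $1+\lambda$ and $1-\lambda$, both lie in $K$, and since $z'$ is their midpoint and lies on $H$ while both lie in the closed half-space bounded by $H$, both dilates must lie on $H$; hence $H$ contains the line through $p$ and $z'$ (or $z'=p\in H$), so $p\in H$ and every ray of $K''$ stays in the half-space containing $K$. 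This is the heart of the matter and matches the paper.

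The one step you should tighten is the strict separation of $z$, where your justification is stated backwards: it is $z'$ that lies strictly between $z$ and $y$, not $z$ between $z'$ and a point of $K$, and from $z'\in H$ together with $y$ lying in the closed half-space containing $K$ one only concludes that $z$ lies in the \emph{closed} opposite half-space. Strictness requires $y\notin H$, and for an arbitrary supporting hyperplane $H$ at $z'$ this can fail (namely when the whole segment $[z,y]$ lies in a face of $K$ exposed by $H$), in which case that particular $H$ does not show $z\notin K''$. The paper's own proof of Lemma~\ref{lem:cylinder} makes the identical leap, so this is a shared gap rather than a defect peculiar to your write-up; it is repaired, for instance, by taking $y$ in the interior of $K\cap L$ (nonempty in every application of the lemma, by the standing hypothesis), since such a $y$ lies off every supporting hyperplane of $K$ and the separation then really is strict.
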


\begin{proof} We proceed
in the same way as for Lemma~\ref{lem:cylinder}.
Note that a supporting hyperplane for
$K$ at any point in $(\partial K)\cap L$ contains
a segment of a line through the vertex $-\frac1\lambda v$.
Such a hyperplane is also a supporting hyperplane for $K''$.
\end{proof}

\begin{proof}[Proof of Theorem~\ref{thm:affine}] 
By assumption $g$ is affine and
non-constant on some line segment 
with endpoints $a\pm w$, where $w\ne 0$.  
We translate $L$ so that $a=0$; switching the endpoints, if
necessary, we take $g$ to be strictly increasing.

Lemma~\ref{lem:affine} says that
there exist $v\in\RR^n$ and $\lambda>0$  such that
\[
(1+t\lambda) (K\cap L) \ = \ 
((K-tv)\cap (L+t(w\!-\!v)))\,
\qquad (-1\le t\le 1).
\]
Since $K\subset K''$ and $L+a\subset L''$,
this is contained in
$(K''-tv)\cap (L''-t(w\!-\!v))$.
Using the symmetry of the cones under dilation
from the vertices, 
$$
(1+t\lambda)K'' = K'' -tv,\quad (1+t\lambda)L'' = L'' +t(w\!-\!v)
\qquad (t\ge 0),
$$
we have that
\begin{equation}
\label{eq:cones-inclusion}
K\cap L \ =\  (1+t\lambda)^{-1}
((K-tv)\cap (L+t(w\!-\!v)))\ \subset \ K''\cap L''
\end{equation}
for all $t\in [-1,1]$.
Since $g(0)>0$, the intersection
$K\cap L$ is non-empty.
Applying Lemma~\ref{lem:cones} twice,
we see that
\[
K\cap L = K''\cap L= K''\cap L'',
\]
which means that the inclusion relation
in Eq.~\eqref{eq:cones-inclusion} is an equality.
Recalling the symmetry properties,
we multiply by $(1+t\lambda)$ and translate by $tv$ to obtain
Eq.~\eqref{eq:KL-pprime}.
\end{proof}

\section{Examples}
\label{sec:examples}

In the case of origin-symmetric bodies,
we have the following necessary and sufficient 
condition for strict $1/n$-concavity:

\begin{corr} [of Theorem~\ref{thm:global}]
\label{corr:symmetric}
Let $K, L\subset\RR^n$ be origin-symmetric, 
strictly convex bodies of dimension $n>1$.
Their cross covariogram $g_{K,L}$
is strictly $1/n$-concave on its support,
if and only if $\partial K\cap \partial L\ne \emptyset$.
\end{corr}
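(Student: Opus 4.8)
The plan is to prove the two implications separately; the forward one is short, and the converse carries the content.

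\emph{Forward direction.} I would argue the contrapositive. If $\partial K\cap\partial L=\emptyset$, then $K\cap L$ is compact and contains the origin (every origin-symmetric convex body does), so Lemma~\ref{lem:dKdL} and $n>1$ force one body into the interior of the other, say $L\subset\operatorname{int}K$. Compactness gives $L$ a positive distance from the complement of $\operatorname{int}K$, so $L+x\subset K$ for all small $x$; there $g_{K,L}(x)=|L|$ is constant, hence $g_{K,L}^{1/n}$ is constant — in particular affine — on short segments through the origin and fails to be strictly concave.

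\emph{Converse.} Assume $\partial K\cap\partial L\ne\emptyset$ and, for contradiction, that $g^{1/n}:=g_{K,L}^{1/n}$ is not strictly concave on its support. Then $g^{1/n}$ is affine along a segment of positive length; since $K$ and $L$ are strictly convex the support $K+(-L)$ is strictly convex, so (after a harmless shrinking) the segment lies where $g_{K,L}>0$. Write its midpoint $a$ and endpoints $a\pm w$, $w\ne0$. The crucial point is that $\partial K\cap\partial L\ne\emptyset$ is strictly weaker than $\max g_{K,L}<\min\{|K|,|L|\}$ — it permits $L$ to lie inside $K$ with touching boundaries, as with a disk and an inscribed ellipse — so Theorem~\ref{thm:global} does not apply and I must use the local Theorems~\ref{thm:constant} and~\ref{thm:affine}. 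I would split on whether $g_{K,L}$ is constant on the segment. If it is \emph{not}, Theorem~\ref{thm:affine} applies, and by the discussion following it the boundary of $K$ must contain a non-trivial segment through every point of $\partial K\cap(L+a)$ unless $L+a\subset\operatorname{int}K$, and likewise $\partial L$ through every point of $K\cap\partial(L+a)$ unless $K\subset\operatorname{int}(L+a)$; strict convexity forbids boundary segments, so \emph{both} containments would hold, which is impossible. Hence $g_{K,L}$ is constant on the segment, and Theorem~\ref{thm:constant} produces cylinders generated by $K$ and by $L+a$ in directions $v$ and $v-w$, not both zero. If both are genuine the same clash recurs, so exactly one is; by the discussion after Theorem~\ref{thm:constant} together with strict convexity, the genuine cylinder being the one over $K$ gives $\partial K\cap(L+a)=\emptyset$ and hence $L+a\subset\operatorname{int}K$, while the genuine cylinder being over $L+a$ gives $K\subset\operatorname{int}(L+a)$.

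It remains to contradict $\partial K\cap\partial L\ne\emptyset$, and this is where origin-symmetry enters. Suppose $\partial K\cap(L+a)=\emptyset$; reflecting through the origin and using $-K=K$, $-L=L$ gives $\partial K\cap(L-a)=\emptyset$, so $L-a\subset\operatorname{int}K$ as well (Lemma~\ref{lem:dKdL} again, since $g_{K,L}(-a)=g_{K,L}(a)>0$). As $L$ is convex, $L=\tfrac12\bigl((L+a)+(L-a)\bigr)\subset\operatorname{int}K$, so $\partial L\subset\operatorname{int}K$ and $\partial K\cap\partial L=\emptyset$ — contradiction. In the remaining case $K\subset\operatorname{int}(L+a)$, the same averaging, together with $(L+a)\cap(L-a)\subset L$, gives $K\subset\operatorname{int}L$ and the identical contradiction. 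This forces $g_{K,L}^{1/n}$ to be strictly concave on its support. I expect the genuine difficulty to be precisely this gap: $\partial K\cap\partial L\ne\emptyset$ is too weak to feed into Theorem~\ref{thm:global}, so one must rule out a \emph{sub-maximal} flat piece of $g_{K,L}^{1/n}$ for strictly convex bodies — exactly what Theorems~\ref{thm:constant} and~\ref{thm:affine} do — and then use origin-symmetry to translate the ``compact containment'' they yield back onto $L$ (or $K$) itself.
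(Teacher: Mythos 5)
Your proof is correct in substance but takes a genuinely different, longer route than the paper for the ($\Leftarrow$) direction. The paper simply picks $z\in\partial K\cap\partial L$ and reruns the extreme\-/point argument from the proof of Theorem~\ref{thm:global}: from Eq.~\eqref{eq:tv}, the points $z\pm(\lambda z+v)$ lie in $K$ and $z\pm(\lambda z+v-w)$ lie in $L$, and strict convexity makes $z$ an extreme point of both bodies, forcing $w=0$ in one stroke, with no case split between Theorems~\ref{thm:constant} and~\ref{thm:affine}. You instead push the two structure theorems to their containment conclusions ($L+a$ inside the interior of $K$, or vice versa) and only then invoke origin\-/symmetry. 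What your detour buys is that it makes explicit something the paper's two-line proof glosses over: the offending segment is centered at some $a$ that need not be $0$, so the hypothesis $\partial K\cap\partial L\ne\emptyset$ has to be transported to $\partial K\cap\partial(L+a)\ne\emptyset$. Your averaging identities $L=\tfrac12\bigl((L+a)+(L-a)\bigr)$ and $(L+a)\cap(L-a)\subset L$ do exactly that, and the same observation would let the paper's shorter argument run at the correct translate. Your forward direction coincides with the paper's.

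One small hole in your converse: in the non-constant (cone) branch you assert that every point of $\partial K\cap(L+a)$ forces a \emph{non-trivial} segment in $\partial K$ unless $L+a$ lies in the interior of $K$. But the segment with endpoints $z\pm(\lambda z+v)$ degenerates when $z$ is the vertex $-\tfrac1\lambda v$ of $K''$, so a priori $\partial K\cap(L+a)$ could consist of that single point without yielding either a boundary segment or a containment; your dichotomy is not exhaustive as stated. It is easily patched: under strict convexity each of $\partial K\cap(L+a)$ and $K\cap\partial(L+a)$ would then be at most a single point, so $\partial\bigl(K\cap(L+a)\bigr)$ would have at most two points, which is impossible for a convex body in dimension $n>1$. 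With that repair the argument is complete.
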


\begin{proof} 
($\Leftarrow$):
Choose $z\in\partial K \cap\partial L$, and 
proceed as in the proof of Theorem~\ref{thm:global}.

($\Rightarrow$):  If $\partial K\cap \partial L=\emptyset$,
then by Lemma~\ref{lem:dKdL}
then one of the bodies (say, $K$)
contains the other ($L$) in its interior.
This is Scenario~(1), where
$g_{K,L}$ has a plateau at the top.
\end{proof}

We next show how to remove the symmetry assumption
from the result of Meyer, Reisner, Schmuckenschl\"ager~\cite[Prop. 2.2]{MRS93}
that was stated in the introduction.
Since Corollary~\ref{corr:minus}
excludes Scenarios (1) and (2),
it implies that $g_{K,-K}$ is strictly
$\alpha$-concave for all $\alpha<1/n$ on the interior of its support. 
(Note that the support of $g_{K,-K}$ equals $2K$,
which is strictly convex if and only if $K$ itself is strictly
convex.)
On the other hand, Scenario (3) commonly occurs for $g_{K,-K}$, 
e.g. when $K$ is a polytope.

\begin{corr} [of Theorem~\ref{thm:constant}]
\label{corr:minus}
Let $K\subset \RR^n$ be a closed convex set,
$-K$ its reflection
through the origin, and $g_{K,-K}$
their cross covariogram. 
If the pair $(K, -K)$ satisfies the standing hypothesis,
then the level sets
\[
\{x\in\RR^n\mid g_{K,-K}(x)\ge h\},\qquad (0<h<\max g_{K,-K})
\]
are strictly convex. Moreover, $g_{K,-K}$ attains its maximum 
at a unique point.
\end{corr}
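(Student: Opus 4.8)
The plan is to reduce everything to a single statement: that $g:=g_{K,-K}$ is \emph{not} constant on any line segment of positive length. Both conclusions follow from this. If a level set $\{x\mid g(x)\ge h\}$ with $0<h<\max g$ were not strictly convex, its (convex, closed) boundary would contain a segment of positive length, and by continuity $g$ would equal $h$ on that segment — a point of $\partial\{g\ge h\}$ where $g>h$ would lie in the interior of $\{g\ge h\}$ — contradicting the statement. Moreover $g$ is continuous and, being $1/n$-concave (Lemma~\ref{lem:g1n-concave}), its set of maximizers is closed and convex; it is non-empty whenever $K$ is bounded, and if it contained two distinct points then $g$ would be constant on the segment joining them — again a contradiction. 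Hence the maximizer is unique.

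To prove the statement, suppose for contradiction that $g(a+tw)=h>0$ for all $t\in[-1,1]$, with $w\ne 0$. Write $M:=-K+a$ and $\mathrm{cyl}(A,u):=\{x+su\mid x\in A,\ s\in\RR\}$ for the cylinder over $A$ in the direction of $u$. The reflection $\sigma(x):=a-x$ is an involution that interchanges $K$ and $M$; consequently it fixes $K\cap M$ as a set, and it sends $\mathrm{cyl}(A,u)$ to $\mathrm{cyl}(\sigma(A),u)$. Theorem~\ref{thm:constant}, applied with $L=-K$, produces a vector $v$ such that
\[
K\cap M \;=\; K'\cap L',\qquad K':=\mathrm{cyl}(K,v),\quad L':=\mathrm{cyl}(M,v-w),
\]
and, exactly as in its proof (where $\lambda=0$ in Eq.~\eqref{eq:tv}), every $z\in K\cap M$ satisfies $z\pm v\in K$ and $z\pm(v-w)\in M$.

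Now apply $\sigma$ to the identity $K\cap M=K'\cap L'$: this gives $K\cap M=\mathrm{cyl}(M,v)\cap\mathrm{cyl}(K,v-w)$. Combined with the trivial inclusions $K\subset\mathrm{cyl}(K,v-w)$ and $M\subset\mathrm{cyl}(M,v)$, it follows that
\[
\mathrm{cyl}(M,v)\cap K \;=\; K\cap M \;=\; \mathrm{cyl}(K,v-w)\cap M.
\]
Since $w\ne 0$, at least one of $v$ and $v-w$ is non-zero; choose such a vector $u$ and set $(A,B):=(K,M)$ if $u=v$ and $(A,B):=(M,K)$ if $u=v-w$. For any $z\in K\cap M$ we have $z+u\in A$ (this is the content of $z\pm v\in K$, resp.\ $z\pm(v-w)\in M$) and $z+u\in\mathrm{cyl}(B,u)$ because $z\in B$; hence $z+u\in\mathrm{cyl}(B,u)\cap A=K\cap M$ by the displayed identity. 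Iterating, $z+ku\in K\cap M$ for every $k\in\NN$ — impossible, since $K\cap M$ is bounded ($|K\cap M|=g(a)=h<\infty$). This contradiction establishes the statement, and the corollary follows.

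I expect the main obstacle to be precisely the fact that the cylindrical conclusion of Theorem~\ref{thm:constant} is, by itself, too weak to yield a contradiction: the boundary of a convex body may well contain line segments (think of a polytope), so the mere existence of the cylinders $K',L'$ says little about the shape of $K$. The decisive extra input is the reflection symmetry exchanging $K$ and $-K$, which upgrades the two one-sided cylinder conditions into a genuine translation invariance of the \emph{bounded} set $K\cap M$. The one point needing care is the bookkeeping in the third paragraph — checking that $\sigma$ carries $K'=\mathrm{cyl}(K,v)$ onto the cylinder over $M$ in the \emph{same} direction $v$, so that the $\sigma$-image of the identity $K\cap M=K'\cap L'$ can be recombined with the original to produce the self-improving implication $z\in K\cap M\Rightarrow z+u\in K\cap M$.
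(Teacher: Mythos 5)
Your proof is correct, and it reaches the contradiction by a genuinely different route than the paper. Both arguments reduce the corollary to showing that $g_{K,-K}$ cannot be constant on a segment of positive length, and both then invoke Theorem~\ref{thm:constant} with $L=-K$ and exploit the reflection $x\mapsto a-x$ that exchanges $K$ and $M:=-K+a$. The paper uses that reflection to pin down the vector as $v=\tfrac12 w$, so that both cylinder directions $v$ and $v-w$ are non-zero; it then concludes that every boundary point of $K\cap M$ has a supporting hyperplane containing a segment parallel to $w$, hence the image of the Gauss map lies in $w^\perp$ and $K\cap M$ has empty interior, contradicting $g>0$. You instead leave $v$ undetermined, apply the reflection to the identity $K\cap M=K'\cap L'$ to obtain its mirror image, and combine the two to show that $K\cap M$ is carried into itself by translation by a fixed non-zero vector $u\in\{v,v-w\}$; iterating contradicts the boundedness of $K\cap M$. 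Your version is somewhat more elementary, avoiding both the surjectivity of the Gauss map for convex bodies and the computation of $v$, while the paper's version yields the sharper geometric picture (the explicit value $v=w/2$ and the tangent-plane description). Two small points deserve a half-sentence each: the boundedness of $K\cap M$ follows because a closed convex set of positive finite volume is necessarily bounded (the standing hypothesis permits unbounded $K$, so this is not automatic); and the existence of a maximizer when $K$ is unbounded is a loose end shared with the paper's own statement, which you correctly flag rather than resolve.
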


\begin{proof}[Proof] 
Suppose that the level set of $g$ at some height 
$h\in (0,|K|\,]$
contains a line segment with endpoints $a\pm w$, 
where $w\ne 0$. Translating $K$ by
$-\frac12 a$, we take $a=0$.

Since $g(x)\equiv h>0$ along the line segment,
we can apply Theorem~\ref{thm:constant}
with $L=-K$
to see that there exists a vector $v$ such that
\[
K\cap (-K +tw) = K'\cap (-K'+tw) = K \cap (-K)+tv
\]
for all $t\in [-1,1]$.
Here, $K'$ and $L'$ are the cylinders defined
in Theorem~\ref{thm:constant}, with $L=-K$.
After subtracting $\frac{t}2 w$ on both sides,
we observe that the left hand side has become symmetric 
through the origin, and infer that $v=\frac12 w$.

It follows that every supporting hyperplane for
$K$ at a point in $\partial K\cap (-K)$ contains 
a segment in the direction of $w$.
By symmetry, the same is true for
points in $K\cap \partial(-K)$. 
In summary, every point in $\partial (K\cap (-K))$
has a supporting hyperplane with normal in $w^\perp$. 
Since the image of its Gauss map
lies in the hyperplane $w^\perp$, we conclude that 
$K\cap (-K)$ has no interior, 
contrary to the assumption that $g>0$ 
on the line segment.
\end{proof}

Finally, we consider the special case of the covariogram
of two sets, one of which is strictly convex.
It turns out that Scenario (3) cannot occur,
and the only obstructions
to strict $1/n$-concavity are Scenario (1),
 and Scenario (2) with $v=w$
(see 
the left images in Figs.~\ref{fig:containment} and ~\ref{fig:cylinder}).

\begin{corr}[of Theorems~\ref{thm:constant} and~\ref{thm:affine}]
Let $K,L$ be convex sets, and $g:=g_{K,L}$
their cross covariogram.
Assume in addition to the standing hypothesis that
$L$ is strictly convex.
If $g^{1/n}$ is affine and not identically zero
on some line segment with endpoints
$a\pm w$, then it is 
constant on that line segment, and
\[
K\cap (L+a+tw)= K'\cap (L+a+tw),\qquad (-1\le t\le 1),
\]
where $K'$
is the cylinder in the direction of~$w$ generated by $K$.
In particular, for every $z\in (\partial K)\cap (L+a)$,
the boundary of $K$ contains the line segment with endpoints
$z\pm w$.
\end{corr}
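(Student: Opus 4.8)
The plan is to rule out Scenario (3) and the case $v \ne w$ of Scenario (2) using strict convexity of $L$, leaving only the cylinder-in-direction-$w$ conclusion. Suppose $g^{1/n}$ is affine and not identically zero on the segment with endpoints $a \pm w$; as usual translate so $a = 0$. Since $g$ is affine and nonnegative on the segment, it is strictly positive on its interior, so $K \cap (L + tw)$ has nonempty interior for $-1 < t < 1$; by continuity $K \cap L$ is a convex body. Lemma~\ref{lem:affine} then yields $v \in \RR^n$ and $\lambda \in [-1,1]$ with $(1 + t\lambda)(K \cap L) = (K - tv)\cap(L + t(w - v))$ for all $t \in [-1,1]$. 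From the equality case of Brunn--Minkowski, the segment with endpoints $z \pm (\lambda z + (v - w))$ lies in $L$ for every $z \in K \cap L$.

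The key observation is that $L$ strictly convex forces these segments in $L$ to be degenerate. If $z \in \mathrm{int}(K \cap L)$, a whole neighborhood of $z$ lies in $L$, so no information follows; but we may pick $z \in (\partial K) \cap L$, which exists unless $K$ contains $L$ in its interior (by Lemma~\ref{lem:dKdL} applied to $K$ and $L$, in dimension $n > 1$) --- and the compact-containment case is Scenario (1), where the statement we must prove is vacuous since $\partial K \cap (L+a)$ is empty and $g$ is constant (equal to $|L|$) on the segment. So assume such a $z$ exists; since $K \cap L$ has interior, we can in fact take $z \in (\partial K) \cap \mathrm{int}(L)$. For such $z$, the containment $z \pm (\lambda z + (v - w)) \in L$ together with $z \in \mathrm{int} L$ gives no contradiction directly, so instead I would argue as follows: the relation $(1+t\lambda)(K\cap L) = (K - tv)\cap (L + t(w-v))$ shows $K \cap L$ is homothetic to $K \cap (L + tw)$ with ratio $1 + t\lambda$ and the homothety moves the reference body rigidly. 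If $\lambda \ne 0$ this is Scenario (3), and Theorem~\ref{thm:affine} gives the cone conclusion: $K\cap(L+tw) = K''\cap(L''+tw)$ with $L''$ the cone over $L$ from a vertex $-\frac1\lambda(v-w)$. Then for every $z \in K \cap (\partial L)$, the boundary of $L$ contains a genuine segment through $z$ toward that vertex --- impossible if $L$ is strictly convex, unless $K \cap (\partial L) = \emptyset$, i.e.\ $L$ contains $K$ in its interior. But $L \supset K$ in its interior together with $g$ strictly increasing on $[-w, w]$ is contradictory, since then $g \equiv |K|$ is constant. Hence $\lambda = 0$: $g$ is constant on the segment.

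With $\lambda = 0$ we are in Scenario (2), and Theorem~\ref{thm:constant} gives $v$ with $K \cap (L + tw) = K' \cap (L' + tw)$ where $K'$ is the cylinder over $K$ in direction $v$ and $L'$ the cylinder over $L$ in direction $v - w$. Again using Theorem~\ref{thm:constant}'s description: for every $z \in K \cap (\partial L)$ the boundary of $L$ contains the segment with endpoints $z - a \pm v$. Strict convexity of $L$ forces $v = w$ (so that $v - w = 0$ and $L' = L$), unless $K \cap (\partial L)$ is empty --- which as above means $L \supset K$ in its interior, forcing $g$ constant, consistent with $\lambda = 0$ but then we may simply take $K'$ to be any cylinder and the conclusion holds vacuously; more carefully, in that case $\partial K \cap (L + a) = \partial K$ and one checks $K' \cap (L + tw) = K \cap (L + tw)$ trivially. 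So $v = w$, giving $K \cap (L + tw) = K' \cap (L + tw)$ with $K'$ the cylinder in direction $w$ over $K$. The final assertion, that $\partial K$ contains the segment $z \pm w$ for each $z \in (\partial K) \cap (L+a)$, is exactly the geometric reading of Eq.~\eqref{eq:KL-prime} recorded after the statement of Theorem~\ref{thm:constant} (with $v = w$, so the roles are: $K$ must contain those segments because $K' = K$-cylinder-in-$w$ and $K \cap L = K' \cap L$).

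The main obstacle is bookkeeping the degenerate/containment cases cleanly: one must verify that whenever $\partial K \cap (L+a)$ or $K \cap \partial(L+a)$ is forced empty, the claimed identity still holds (vacuously or trivially) and is consistent with $g$ being constant, rather than producing a spurious counterexample. Once the trichotomy "$L \supset K$ in interior / $\lambda \ne 0$ / $\lambda = 0$ with $v \ne w$" is seen to collapse --- the first forces constancy, the middle two are killed by strict convexity of $L$ via Theorems~\ref{thm:affine} and~\ref{thm:constant} respectively --- only "$\lambda = 0$, $v = w$" survives, which is the asserted conclusion.
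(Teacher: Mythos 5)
The paper states this corollary without proof, so there is no official argument to measure yours against; the route you take --- extract $v$ and $\lambda$ from Lemma~\ref{lem:affine}, then let strict convexity of $L$ kill the segments that Eq.~\eqref{eq:tv} places in $L$ through each point of $K\cap\partial L$, forcing first $\lambda=0$ and then $v=w$ --- is surely the intended one, and its main line is sound. Two smaller repairs are needed. In the case $\lambda\ne 0$ the segment with endpoints $z\pm(\lambda z+v-w)$ is degenerate exactly when $z=\tfrac1\lambda(w-v)$ (the vertex of $L''$), so strict convexity only shows that $K\cap\partial L$ is contained in that single point; you must still dispose of the one-point case (one can: then $\partial(K\cap L)$ minus that point lies in $\partial K\cap\operatorname{int}L$, which forces $K\subset L$, hence $g\equiv|K|$ on the segment, contradicting $\lambda\ne 0$). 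Also, in the $\lambda=0$ case the segment in $L$ has direction $v-w$, not $v$; as written, ``the boundary of $L$ contains the segment with endpoints $z-a\pm v$, so strict convexity forces $v=w$'' does not cohere (a segment in direction $v$ would force $v=0$), though your conclusion $v-w=0$ is the right one.

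The genuine gap is your treatment of the containment case $K\cap\partial L=\emptyset$, i.e.\ $K\subset\operatorname{int}L$. You claim that there ``one checks $K'\cap(L+tw)=K\cap(L+tw)$ trivially.'' This is false. Take $K$ the unit disk and $L$ the disk of radius $10$, both centered at the origin, and $w=e_1$: the hypotheses of the corollary hold ($g\equiv|K|$ on the segment), but $K\cap(L+tw)=K$, whereas $K'\cap(L+tw)$, the intersection of the infinite strip $\{|x_2|\le 1\}$ with a large disk, strictly contains $K$; the final clause also fails, since $\partial K\cap(L+a)=\partial K$ is a circle containing no segments. In this situation Lemma~\ref{lem:affine} gives $v=0$, not $v=w$, and the conclusion of the corollary as literally stated is simply not true. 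So this case cannot be folded into the desired identity: it must be carved out as a separate alternative (it is Scenario~(1) with the roles of $K$ and $L$ interchanged, which the paragraph preceding the corollary lists as a distinct obstruction), or excluded by an extra hypothesis such as $K\not\subset\operatorname{int}(L+a)$. Your proof, by asserting the identity holds there, papers over what is in fact a defect in the statement itself rather than resolving it.
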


\newpage

\bibliographystyle{amsplain}
\bibliography{AGL}

\end{document}